
\documentclass[11pt,leqno]{article}

\usepackage{amsmath,amssymb,amsthm}
\usepackage[all]{xy}
\usepackage{lscape}

\makeatletter


\newtheorem*{Theorem}{Theorem~\ref{main.thm}}

\newtheorem*{Corollary}{Corollary~\ref{main.cor}}

\newtheorem{theorem}{Theorem}[section]

\newtheorem{lemma}[theorem]{Lemma}
\newtheorem{corollary}[theorem]{Corollary}

\theoremstyle{definition}

\theoremstyle{remark}

\theoremstyle{remark}

\def\({{\rm (}}
\def\){{\rm )}}

\let\Mathrm\operator@font

\let\Bbb\mathbb
\newcommand{\fm}{\ensuremath{\mathfrak m}}
\newcommand{\fn}{\ensuremath{\mathfrak n}}

\def\standop#1{\mathop{\Mathrm #1}\nolimits}
\def\difstop#1#2{\expandafter\def\csname #1\endcsname{\standop{#2}}}
\def\defstop#1{\difstop{#1}{#1}}

\defstop{AB}
\defstop{ann}
\defstop{Ass}
\defstop{Add}
\defstop{Alt}
\defstop{Ass}
\difstop{adim}{AFHdim}

\defstop{Bl}

\defstop{CMFI}
\defstop{codim}
\defstop{Coh}
\defstop{coht}
\defstop{Coker}
\defstop{Cone}
\defstop{Cl}
\defstop{Cox}
\defstop{cosk}
\defstop{cd}
\defstop{cmd}
\difstop{charac}{char}

\defstop{depth}
\defstop{Der}
\defstop{Div}
\defstop{div}

\defstop{EM}
\defstop{embdim}
\defstop{End}
\defstop{ev}
\defstop{Ext}

\difstop{fdim}{flat.dim}
\defstop{Flat}
\defstop{Func}
\defstop{Fpqc}

\defstop{GD}
\defstop{Good}
\defstop{Gal}
\defstop{Grass}

\difstop{height}{ht}
\defstop{Hom}
\defstop{Hy}

\difstop{Image}{Im}
\defstop{ind}
\defstop{ini}

\defstop{Ker}

\defstop{Lch}
\defstop{length}
\defstop{Lin}
\defstop{Lqc}
\defstop{lqc}
\defstop{LQ}
\defstop{LM}
\defstop{Lie}
\defstop{Loc}

\defstop{Mat}
\defstop{Max}
\defstop{Min}
\defstop{Mod}
\defstop{Mor}
\defstop{MCM}
\defstop{Map}
\difstop{mred}{red}

\defstop{Nerve}
\defstop{NonSFR}
\defstop{NonCMFI}
\defstop{NonNor}
\defstop{Nor}

\defstop{ob}
\defstop{Ob}
\def\op{^{\standop{op}}}

\defstop{PA}
\defstop{PM}
\defstop{PR}
\defstop{Proj}
\defstop{Prin}
\defstop{Pic}

\defstop{Qch}
\defstop{qch}

\defstop{rad}
\defstop{rank}

\defstop{res}
\defstop{Reg}
\defstop{Ref}
\defstop{Rat}

\defstop{Spec}
\defstop{supp}
\defstop{Supp}
\defstop{Sym}
\defstop{Sing}
\defstop{SFR}
\defstop{Soc}
\defstop{Sh}

\difstop{tdeg}{trans.deg}

\defstop{Tor}
\defstop{TRD}
\difstop{trace}{tr}
\defstop{tot}

\defstop{Zar}
\defstop{FL}
\defstop{add}
\defstop{Ind}
\defstop{grmod}

\difstop{summ}{sum}

\let\frak\mathfrak
\def\({{\rm(}}
\def\){{\rm)}}

\def\fm{\mathfrak{m}}

\def\QQ{{\mathbb{Q}}}

\def\ZZ{{\mathbb{Z}}}


\let\projlim\varprojlim

\def\sdarrow#1{\downarrow\hbox to 0pt{\scriptsize$#1$\hss}}
\def\suarrow#1{\uparrow\hbox to 0pt{\scriptsize$#1$\hss}}
\def\ssearrow#1{\searrow\hbox to 0pt{\scriptsize$#1$\hss}}


\def\section{\@startsection{section}{1}{\z@ }%
  {-3.5ex plus -1ex minus -.2ex}{2.3ex plus .2ex}{\bf }}

\long\def\refname{\par\kern -3ex
  \begin{center}\rm R\sc{eferences}\end{center}\par\kern 
  -2ex}

\def\@seccntformat#1{\csname the#1\endcsname.\quad}

\def\@@@sect#1#2#3#4#5#6[#7]#8{%
  \ifnum #2>\c@secnumdepth 
  \def \@svsec {}\else \refstepcounter {#1}%
  \def\@svsec{}
  \fi 
  \@tempskipa #5\relax 
  \ifdim \@tempskipa >\z@ 
  \begingroup #6\relax \@hangfrom {\hskip #3\relax 
    \@svsec}{\interlinepenalty \@M #8\par }\endgroup 
  \csname #1mark\endcsname {#7}
  \else 
  \def \@svsechd {#6\hskip #3\@svsec #8\csname #1mark\endcsname {#7}}
  \fi \@xsect {#5}}

\def\@@@startsection#1#2#3#4#5#6{%
  \if@noskipsec \leavevmode \fi \par \@tempskipa #4\relax \@afterindenttrue 
  \ifdim \@tempskipa <\z@ \@tempskipa -\@tempskipa \@afterindentfalse 
  \fi \if@nobreak \everypar {}\else \addpenalty {\@secpenalty }\addvspace 
  {\@tempskipa }\fi \@ifstar {\@ssect {#3}{#4}{#5}{#6}}{\@dblarg 
    {\@@@sect {#1}{#2}{#3}{#4}{#5}{#6}}}}

\def\theparagraph{\thesection.\arabic{paragraph}}
\def\aparagraph{\@@@startsection{paragraph}{2}{\z@ }%
  {1.75ex plus .2ex minus .15ex}{-1em}{\bf(\theparagraph) } }
\def\paragraph{\@@@startsection{paragraph}{2}{\z@ }%
  {1.75ex plus .2ex minus .15ex}{-1em}{}{\bf(\theparagraph)} }

\c@secnumdepth 3
\let\c@theorem\c@paragraph

\advance\textheight 16mm
\voffset -8mm
\advance\textwidth 20mm
\hoffset -10mm

\title{Indecomposability of graded modules\\over a graded ring}
\author{M{\sc itsuyasu} H{\sc ashimoto}\thanks{Partially supported by JSPS KAKENHI Grant number 20K03538
    and MEXT Promotion of Distinctive Joint Research Center Program JPMXP0619217849.}
  \and
  Y{\sc untian} Y{\sc ang}}
\date{}

\begin{document}

\maketitle
\footnote[0]
{2020 \textit{Mathematics Subject Classification}. 
  Primary 16W50; Secondary 13A02.
  Key Words and Phrases.
  graded ring, indecomposable module
}

\begin{abstract}
  Let $R=\bigoplus_{i\geq 0}R_i$ be a Noetherian commutative non-negatively graded ring such that $(R_0,\fm_0)$ is a
  Henselian local ring.
  Let $\fm$ be its unique graded maximal ideal $\fm_0+\bigoplus_{i>0}R_i$.
  Let $T$ be a module-finite (non-commutative) graded $R$-algebra.
  Let $T\grmod$ denote the category of finite graded left $T$-modules,
  and $M\in T\grmod$.
  Then the following are equivalent:
  (1) $\hat M$ is an indecomposable $\hat T$-module, where $\widehat{(-)}$ denotes the $\fm$-adic completion;
  (2) $M_\fm$ is an indecomposable $T_\fm$-module;
  (3) $M$ is an indecomposable $T$-module;
  (4) $M$ is indecomposable as a graded $T$-module.
  As a corollary we prove that for two finite graded left $T$-modules $M$ and $N$, the following are equivalent:
  (1) If $M=M_1\oplus\cdots\oplus M_s$ and $N=N_1\oplus\cdots\oplus N_t$ are
    decompositions into indecomposable objects in $T\grmod$, then $s=t$, and
    there exist some permutation $\sigma\in \frak S_s$ and integers $d_1,\ldots,d_s$
    such that $N_i\cong M_{\sigma i}(d_i)$, where $-(d_i)$ denotes the shift of degree;
  (2) $M\cong N$ as $T$-modules;
  (3) $M_\fm\cong N_\fm$ as $T_\fm$-modules;
    (4) $\hat M\cong \hat N$ as $\hat T$-modules.
    As an application, we compare the FFRT property of rings of characteristic $p$ in the
    graded sense and in the local sense.
\end{abstract}

\section{Introduction}

In many cases, a theory in Noetherian local rings has its graded version
or ${}^*$version.
Moreover, such a graded version often determines the ring-theoretic property of the graded ring $A$
as a non-graded ring.

We give an easy illustrative example.
Let $A=\bigoplus_{i\geq 0}$ be a finitely generated positively graded (that is, $A_0=k$) commutative algebra over a field $k$.
Then we can define a homogeneous system of parameters of $A$ to be a sequence $x_1,\ldots,x_d$ of
homogeneous elements of positive degrees such that $\dim_k A/(x_1,\ldots,x_d)<\infty$, and
the length of the sequence $d$ is small as possible among such sequences.
It is well-known that a sequence $x_1,\ldots,x_r$ of homogeneous elements of positive degree is a homogeneous system of
parameters if and only if their images in the local ring $A_\fm$ is a system of parameters in the local sense.
In particular, if $x_1,\ldots,x_d$ is a homogeneous system of parameters, then we have $d=\dim A_\fm$.
What is interesting is $\dim A=\dim A_\fm$ in fact \cite[Section~1.5]{BH}.
Moreover, there is a homogeneous system of parameters which is a regular sequence if and only if
$A_\fm$ is Cohen--Macaulay if and only if $A$ is Cohen--Macaulay \cite{MR}.

Some important results in this direction can be found in the textbook \cite{BH}.

In this paper, we compare the indecomposability of finitely generated module $M$ over a module-finite
algebra over a graded ring along this line, and we get the following.
Namely, we have

\begin{Theorem}
  Let $R=\bigoplus_{i\geq 0}R_i$ be a Noetherian commutative graded ring such that $(R_0,\fm_0)$ is Henselian local,
  $R_+=\bigoplus_{i>0}R_i$, $\fm=R_++\fm_0$, and $T$ a $\Bbb Z$-graded module-finite \(non-commutative\) $R$-algebra.
  Let $M$ be a finitely generated graded left $T$-module.
  Then the following are equivalent.
  \begin{enumerate}
  \item[\rm(1)] $\hat M$ is indecomposable as a $\hat T$-module, where $\widehat{(-)}$ denotes
    the $\fm$-adic completion.
  \item[\rm(2)] $M_\fm$ is indecomposable as a $T_\fm$-module.
  \item[\rm(3)] $M$ is indecomposable as a $T$-module.
  \item[\rm(4)] $M$ is indecomposable as a graded $T$-module.
  \end{enumerate}
\end{Theorem}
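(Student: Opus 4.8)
The plan is to translate each of $(1)$--$(4)$ into the statement that a suitable ring is \emph{connected} (has no idempotent other than $0,1$), and then to run a short cycle of implications. Set $E:=\End_T(M)$. As $M$ is a finite $T$-module and $T$ is module-finite over the Noetherian ring $R$, $M$ is a finite $R$-module and $E$ is an $R$-submodule of the finite $R$-module $\Hom_R(M,M)$, so $E$ is a module-finite $R$-algebra; it is $\ZZ$-graded, $E=\bigoplus_d E_d$ with $E_d=\Hom_{T\grmod}(M,M(d))$ (a genuine direct sum because $M$ is finitely generated), and it is bounded below since $T$ and hence $M$ are. From a finite $T$-presentation of $M$, flat base change along $R\to R_\fm$ and along the $\fm$-adic completion $R\to\hat R$ (here $\fm$ is a maximal ideal of $R$, so $\hat R$ is a complete Noetherian local ring) gives $E_\fm=\End_{T_\fm}(M_\fm)$ and $\hat E=\End_{\hat T}(\hat M)$, while $E_0=\End_{T\grmod}(M)$. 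Since a (graded) module is indecomposable exactly when its (graded) endomorphism ring is connected, the theorem asserts that $\hat E$, $E_\fm$, $E$ and $E_0$ are connected simultaneously. We prove $(1)\Rightarrow(2)\Rightarrow(3)\Rightarrow(4)\Rightarrow(1)$; the case $M=0$ being vacuous, assume $M\ne0$.

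For $(1)\Rightarrow(2)$ it is enough that $E_\fm\to\hat E$ be injective, which follows from Krull's intersection theorem, $\hat E$ being the $\fm R_\fm$-adic completion of the finite $R_\fm$-module $E_\fm$ over the Noetherian local ring $R_\fm$; so a nontrivial idempotent of $E_\fm$ stays nontrivial in $\hat E$. For $(2)\Rightarrow(3)$ I show a nontrivial idempotent $e$ of $E$ stays nontrivial in $E_\fm$: write $e=\sum_i e_i$ in homogeneous components and let $a$ be minimal with $e_a\ne0$; if $se=0$ for some $s\notin\fm$, write $s=s_0+s_+$ with $s_0\in R_0$ and $s_+\in R_+$, so $s_0\in R_0^\times$ as $s\notin\fm=\fm_0+R_+$, and the degree-$a$ component of $se=0$ reads $s_0e_a=0$, forcing $e_a=0$, a contradiction; hence $e/1\ne0$, and the same argument applied to the nontrivial idempotent $1-e$ gives $e/1\ne1$. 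The implication $(3)\Rightarrow(4)$ is immediate, an idempotent of $E_0$ being one of $E$.

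The substance is $(4)\Rightarrow(1)$. Put $\bar E:=E/\fm E\cong\hat E/\fm\hat E$, a finite-dimensional $\ZZ$-graded algebra over the field $R_0/\fm_0$, with degree-zero part $\bar E_0=E_0/(\fm E)_0$ where $(\fm E)_0=\fm_0 E_0+\sum_{i>0}R_iE_{-i}$. Because $E$ is bounded below, a sufficiently long product of elements of $\sum_{i>0}R_iE_{-i}$ has its endomorphism part in a degree below the bottom of $E$ and hence vanishes, so $((\fm E)_0)^N\subseteq\fm_0E_0\subseteq\rad(E_0)$ for $N\gg0$, whence $(\fm E)_0\subseteq\rad(E_0)$. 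Since $E_0$ is module-finite over the Henselian local ring $R_0$, idempotents lift along $E_0\to E_0/\fm_0E_0$ and then along the surjection $E_0/\fm_0E_0\to\bar E_0$, whose kernel is nilpotent; so connectedness of $E_0$ descends to $\bar E_0$, which is therefore local Artinian. Now $\rad(\bar E)$ is a graded ideal and $\bar E/\rad(\bar E)$ is $\ZZ$-graded semisimple; its central idempotents are homogeneous of degree $0$ (an idempotent homogeneous of nonzero degree is $0$, and the homogeneous components of a central idempotent are again central idempotents), and its degree-zero part is a nonzero quotient of $\bar E_0$, hence local; by the structure theory of $\ZZ$-graded semisimple Artinian rings this forces $\bar E/\rad(\bar E)$ to be a graded division ring, hence connected. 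Since $\bar E/\rad(\bar E)$ is connected while idempotents in the nilpotent ideal $\rad(\bar E)$ vanish, $\bar E$ is connected. Finally, $\hat E$ is module-finite over the complete local ring $\hat R$, so $\fm\hat E\subseteq\rad(\hat E)$; an idempotent of $\hat E$ reducing to $0$ in $\bar E=\hat E/\fm\hat E$ lies in $\fm\hat E\subseteq\rad(\hat E)$ and so is $0$, and likewise one reducing to $1$ equals $1$. Thus $\bar E$ connected $\Rightarrow$ $\hat E$ connected, completing the cycle.

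I expect the main obstacle to be $(4)\Rightarrow(1)$, and inside it the purely ring-theoretic fact that a finite-dimensional $\ZZ$-graded algebra over a field whose degree-zero component is connected is itself connected; I plan to derive this from the graded Artin--Wedderburn theorem together with the homogeneity of central idempotents in a graded ring. A secondary technical point is the clean identification of the four endomorphism rings under localization and completion, which uses Noetherianness of $R$ (hence of $T$) and the fact that $\fm$ is maximal, so that $\hat R$ is complete Noetherian local.
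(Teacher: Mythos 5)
Your cycle of implications $(1)\Rightarrow(2)\Rightarrow(3)\Rightarrow(4)\Rightarrow(1)$ and the translation of each condition into connectedness of $\hat E$, $E_\fm$, $E$, $E_0$ is exactly the paper's strategy, but the supporting lemmas differ in two places. For $(2)\Rightarrow(3)$ the paper argues on $M$ directly via Artin--Rees and graded Nakayama (Lemma~\ref{Krull.lem}); your observation that $E\to E_\fm$ is already injective — because, for $s\notin\fm$, the degree-$a$ component of $se=0$ reads $s_0e_a=0$ with $s_0\in R_0^\times$ — is shorter and sidesteps Artin--Rees. For $(4)\Rightarrow(1)$ the paper develops a compact ${}^*\rad$ calculus (Lemmas~\ref{J^r.lem}, \ref{idempotent.lem}, \ref{T-rad.lem}) and reduces $\hat E$ modulo $\hat J$ directly to the division ring $E_0/\rad E_0$; you reduce first modulo $\fm\hat E$ and then modulo $\rad\bar E$, invoking Jacobson radicals, idempotent lifting, and the graded Artin--Wedderburn theorem. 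Since $\hat J/\fm\hat E=\rad\bar E$, this is the same reduction repackaged in classical (ungraded-radical) language; the paper's route is more self-contained, yours leans on more standard machinery. Two points in your $(4)\Rightarrow(1)$ want tightening: the parenthetical ``the homogeneous components of a central idempotent are again central idempotents'' is false as stated (the components are central but need not be idempotent); what actually forces a central idempotent of the graded \emph{semisimple} ring $\bar E/\rad\bar E$ to lie in degree $0$ is that its extremal nonzero homogeneous component $e_a$ (for $a\ne0$) satisfies $e_a^2=(e^2)_{2a}=e_{2a}=0$, hence is a central nilpotent and vanishes. Likewise, both ``$\rad\bar E$ is graded'' and the final ``graded division ring, hence concentrated in degree $0$'' need a word of justification (the first because $\rad\bar E={}^*\rad\bar E$ is graded by construction and nilpotent by the analogue of Lemma~\ref{J^r.lem}; the second because a finite-dimensional graded division ring with a unit in a nonzero degree $d$ would be nonzero in all degrees $nd$). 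With those details filled in the argument is correct and is, at bottom, the paper's argument in a different idiom.
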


As a corollary, we prove

\begin{Corollary}
  Let $M$ and $N$ be objects in $T\grmod$, the category of finitely generated graded left $T$-modules.
  Then the following are equivalent.
  \begin{enumerate}
  \item[\rm(1)] If $M=M_1\oplus\cdots\oplus M_s$ and $N=N_1\oplus\cdots\oplus N_t$ are
    decompositions into indecomposable objects in $T\grmod$, then $s=t$, and
    there exist some permutation $\sigma\in \frak S_s$ and integers $d_1,\ldots,d_s$
    such that $N_i\cong M_{\sigma i}(d_i)$, where $-(d_i)$ denotes the shift of degree.
  \item[\rm(2)] $M\cong N$ as $T$-modules.
  \item[\rm(3)] $M_\fm\cong N_\fm$ as $T_\fm$-modules.
  \item[\rm(4)] $\hat M\cong \hat N$ as $\hat T$-modules.
  \end{enumerate}
\end{Corollary}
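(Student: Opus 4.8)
The plan is to establish the cycle of implications $(1)\Rightarrow(2)\Rightarrow(3)\Rightarrow(4)\Rightarrow(1)$; the first three are formal and essentially all the work is in $(4)\Rightarrow(1)$. For $(1)\Rightarrow(2)$, note that a decomposition in $T\grmod$ is in particular a decomposition of the underlying $T$-modules and that the degree-shift $-(d)$ leaves the underlying $T$-module unchanged, so $N\cong\bigoplus_i N_i\cong\bigoplus_i M_{\sigma i}\cong M$ as $T$-modules. For $(2)\Rightarrow(3)$ apply $-\otimes_R R_\fm$, and for $(3)\Rightarrow(4)$ apply $\fm$-adic completion, using that $\hat M$ is canonically the completion of $M_\fm$ (because $M/\fm^nM\cong M_\fm/\fm^nM_\fm$) and $\hat T=\widehat{T_\fm}$.

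For $(4)\Rightarrow(1)$ I would first record two Krull--Schmidt facts: (a) $T\grmod$ is a Krull--Schmidt category, since for a finite graded $T$-module $X$ the ring $\Hom_{T\grmod}(X,X)$ is a module-finite algebra over the Henselian local ring $R_0$, hence semiperfect with idempotents lifting; (b) the category of finite $\hat T$-modules is Krull--Schmidt, since $\hat R$ is complete Noetherian local and $\hat T$ is module-finite over it. Fixing decompositions $M=\bigoplus_{i=1}^s M_i$ and $N=\bigoplus_{j=1}^t N_j$ into indecomposables in $T\grmod$, Theorem~\ref{main.thm} (its implication $(4)\Rightarrow(1)$ applied to each summand) makes each $\hat M_i$ and $\hat N_j$ a nonzero indecomposable $\hat T$-module, so $\hat M=\bigoplus_i\hat M_i$ and $\hat N=\bigoplus_j\hat N_j$ are decompositions into indecomposables; since $\hat M\cong\hat N$, fact (b) yields $s=t$ and a permutation $\sigma\in\frak S_s$ with $\hat N_i\cong\hat M_{\sigma i}$ as $\hat T$-modules for all $i$. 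It then suffices to prove: if $P,Q$ are indecomposable in $T\grmod$ and $\hat P\cong\hat Q$ as $\hat T$-modules, then $P\cong Q(d)$ in $T\grmod$ for some $d\in\ZZ$.

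To prove this, let $\theta\colon\uHom_T(P,Q)\otimes_R\uHom_T(Q,P)\to\End_T(P)$ be the (graded, degree-preserving) composition map; here $\uHom_T(P,Q)=\bigoplus_d\Hom_{T\grmod}(P,Q(d))$ coincides with the ordinary $T$-module Hom since $P$ is finitely generated, and it is a finite graded $R$-module because $T$ is module-finite over $R$ and $P,Q$ are finite. Since $\hat R$ is flat over $R$ and $P,Q$ are finitely presented over $T$, flat base change identifies $\uHom_T(P,Q)\otimes_R\hat R\cong\Hom_{\hat T}(\hat P,\hat Q)$ (and likewise for the other Hom and for $\End$), compatibly with composition, and also $(\Image\theta)\otimes_R\hat R\cong\Image(\theta\otimes_R\hat R)$. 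An isomorphism $\hat P\cong\hat Q$ writes $\id_{\hat P}=v\circ u$ with $u\in\Hom_{\hat T}(\hat P,\hat Q)$ and $v\in\Hom_{\hat T}(\hat Q,\hat P)$, so $\id_P\in(\Image\theta)\otimes_R\hat R$ inside $\End_T(P)\otimes_R\hat R=\End_{\hat T}(\hat P)$. Now the crucial input is that every finite graded $R$-module is $\fm$-adically separated --- which follows from Krull's intersection theorem applied degreewise in the Noetherian local ring $R_0$, using that a finite graded $R$-module is bounded below and that $\fm=\fm_0\oplus R_+$ is a graded ideal --- so $\End_T(P)/\Image\theta$ embeds in its $\fm$-adic completion, whence $\id_P\in\Image\theta$ already. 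Writing $\id_P=\sum_{i=1}^m f_i\circ g_i$ with $g_i\colon P\to Q$ and $f_i\colon Q\to P$ homogeneous of opposite degrees $\pm e_i$ exhibits $P$ as a graded direct summand of the finite direct sum $\bigoplus_i Q(e_i)$; since $P$ and each $Q(e_i)$ are indecomposable in $T\grmod$, fact (a) forces $P\cong Q(e_j)$ for some $j$, which is the desired claim.

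The main obstacle is precisely this last claim: completion forgets the $\ZZ$-grading, so an a priori ungraded isomorphism $\hat P\cong\hat Q$ must be promoted to a graded isomorphism between $P$ and a shift of $Q$, and the shift-related indecomposables $Q(e_i)$ become indistinguishable after completion. The mechanism that resolves this is to detect $\id_P$ inside the graded submodule $\Image\theta$ after base change to $\hat R$ and then descend, which works only because finite graded $R$-modules are $\fm$-adically separated; verifying that separatedness carefully --- it is the step where both the grading and the local ring $R_0$ genuinely enter --- is the one technical point that requires care.
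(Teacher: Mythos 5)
Your proof is correct, and the reduction to a single pair of indecomposables $P,Q$ via Krull--Schmidt for $\hat T$-modules and Theorem~\ref{main.thm} is exactly what the paper does. Where you diverge is the final step, showing that $\hat P\cong\hat Q$ forces $P\cong Q(d)$ in $T\grmod$. The paper proceeds more directly: it writes the isomorphism $\hat\phi$ and its inverse $\hat\psi$ as $\hat R$-linear combinations of homogeneous morphisms $\phi_i,\psi_j$, observes that $1_{\hat M}=\sum\hat a_i\hat b_j\,\psi_j\phi_i\notin\hat J$ forces some $\psi_j\phi_i\notin J={}^*\rad(\End_T M)$, and then, since $\End_TM$ is ${}^*$local (equivalently $M$ is indecomposable in $T\grmod$), the homogeneous element $\psi_j\phi_i$ is automatically a unit, so $\psi_j:N\to M(v_j)$ is a split epimorphism between indecomposables, hence an isomorphism. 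Your argument instead detects $\id_P$ inside $\Image\theta$ by a flat base change plus separatedness argument, then exhibits $P$ as a graded direct summand of $\bigoplus_i Q(e_i)$ and appeals to Krull--Schmidt in $T\grmod$. Both routes are sound; what the paper's version buys is economy (it never needs $\Image\theta$, separatedness, or the second Krull--Schmidt invocation, because the ${}^*$local structure of $\End_TM$ does the work in one stroke), while what yours buys is a more modular argument whose ingredients (flat base change, $\fm$-adic separatedness, Krull--Schmidt) are all standard and individually transparent. One small remark: the separatedness of finite graded $R$-modules that you re-derive degreewise from Krull's intersection theorem over $R_0$ is precisely the paper's Lemma~\ref{Krull.lem}, which is proved there by Artin--Rees and graded Nakayama; your degreewise argument is a valid alternative, but you could simply cite that lemma.
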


As an application, we prove a comparison theorem for the finite $F$-representation type (FFRT for short) property
of an $F$-finite non-negatively graded ring $R=\bigoplus_{i\geq 0}R_i$ such that $(R_0,\fm_0)$ is
an $F$-finite Henselian local ring of a prime characteristic $p$ in the graded sense
and the FFRT property of the $\fm$-adic completion of $R$, where $\fm=\fm_0+\bigoplus_{i>0}R_i$
(Corollary~\ref{main2.cor}).
See Corollary~\ref{main2.cor} for the definition of FFRT.
This property for rings of characteristic $p$ was defined by K.~E.~Smith and M.~Van den Bergh \cite{SvdB},
and has been studied extensively \cite{AK,DQ,HB,HO,Shibuta1,Shibuta2}.

\medskip

Acknowledgment.
The essential part of this joint work was done in 2019, and is recorded as
\cite{Yang}.
In \cite{Yang}, Theorem~\ref{main.thm} and Corollary~\ref{main.cor} for the
case that $R=T$ was proved.
The authors are grateful to Professor Osamu Iyama for valuable discussions.

\section{Preliminaries}

\paragraph
For a ring $A$, we denote the set of units of $A$ by $A^\times$.
We say that $A$ is {\em local} if $A\setminus A^\times$ is an additive subgroup of $A$.
This is equivalent to say that $A\neq 0$, and $A\setminus A^\times$ is closed under addition.
If so, $A\setminus A^\times$ is the unique maximal left ideal of $A$.
It is also the unique maximal right ideal of $A$, and agrees with the radical $\rad A$.
Note that $A$ is local if and only if $A/\rad A$ is a division ring, see \cite[(19.1)]{Lam}.

\paragraph
Let $B=\bigoplus_{i\in\ZZ}B_i$ be a graded ring, and $M=\bigoplus_{i\in\ZZ}M_i$ be a graded left module.
A left graded submodule $N$ is said to be ${}^*$maximal if $N$ is a maximal element of the
set $\{N\subset M\mid \text{$N$ is a graded submodule of $M$ and $N\neq M$}\}$.
The intersection of all the ${}^*$maximal graded submodules is denoted by ${}^*\rad M$.
If $f:M\rightarrow M'$ is a homomorphism of graded left $B$-modules, then $f({}^*\rad M)\subset{}^*\rad M'$.
In particular, if $b\in B_i$, then $({}^*\rad B)b\subset{}^*\rad(B(i))=({}^*\rad B)(i)$,
and ${}^*\rad B$ is a two-sided ideal.

\begin{lemma}\label{*maximal-exists.lem}
  Let $B$ be as above.
  Then any nonzero finitely generated left graded module $M$ has a ${}^*$maximal submodule.
  In particular, $M\neq {}^*\rad M$.
\end{lemma}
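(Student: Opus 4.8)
**Proof proposal for Lemma \ref{*maximal-exists.lem}.**

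The plan is to reduce to a counting/Noetherian argument. Since $M$ is finitely generated over a graded ring $B$, we may pick homogeneous generators $m_1,\dots,m_n$. The idea is to produce a $*$maximal submodule by a Zorn-type argument, but care is needed: an arbitrary union of proper graded submodules need not be proper, so one must first arrange that $B$ itself is reasonably behaved. I expect the cleanest route is to observe that, because $B$ is a graded ring (not assumed Noetherian here, but $M$ is finitely generated so the submodule lattice we care about is controlled), we can consider the poset $\mathcal{P}$ of proper graded submodules of $M$ ordered by inclusion, and apply Zorn's lemma after checking that chains have upper bounds within $\mathcal{P}$. The point where properness of the union can fail is when a generator $m_i$ eventually lands in the union; this is precisely what finite generation rules out, since if $N = \bigcup_\alpha N_\alpha$ is a chain of proper graded submodules and $N = M$, then each $m_i \in N_{\alpha_i}$ for some $\alpha_i$, and taking the largest (the chain is totally ordered) $N_{\alpha_0}$ containing all of them would force $N_{\alpha_0} = M$, a contradiction.

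So the first step is: set up the poset $\mathcal{P} = \{\,N \subsetneq M \mid N \text{ a graded } B\text{-submodule}\,\}$, noting $\mathcal{P} \neq \emptyset$ since $0 \in \mathcal{P}$ (here we use $M \neq 0$). Second step: given a chain $\{N_\alpha\}$ in $\mathcal{P}$, set $N = \bigcup_\alpha N_\alpha$, which is a graded submodule; show $N \neq M$ by the finite-generation argument above (if $N = M$ then all $m_i$ lie in a single $N_{\alpha_0}$ since the $N_\alpha$ are totally ordered, whence $N_{\alpha_0} = M$, contradicting $N_{\alpha_0} \in \mathcal{P}$). Third step: Zorn's lemma yields a maximal element $N_{\max}$ of $\mathcal{P}$, which is by definition a $*$maximal submodule of $M$. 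Finally, $N_{\max} \supseteq {}^*\rad M$ by definition of ${}^*\rad M$ as the intersection of all $*$maximal submodules, and since $N_{\max} \neq M$ we get ${}^*\rad M \neq M$.

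The main (and really only) obstacle is the verification that a chain union stays proper — i.e., that the empty case and the "a generator escapes into the union" case are handled — and this is exactly where finite generation of $M$ is used essentially; without it the statement is false (e.g. $M = \bigoplus_{i \in \mathbb{Z}} B(i)$-type constructions can fail to have $*$maximal submodules). Everything else is formal: gradedness of the union of graded submodules is immediate, and the passage from $N_{\max}$ to the conclusion ${}^*\rad M \subsetneq M$ is definitional.
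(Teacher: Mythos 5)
The proposal is correct and follows essentially the same argument as the paper: both set up the poset of proper graded submodules, verify that a chain's union (the paper writes it as a sum, but these coincide for a totally ordered family) remains proper because the finitely many homogeneous generators would otherwise all land in a single member of the chain, and then apply Zorn's lemma. Your handling of non-emptiness via $0\in\mathcal{P}$ and the deduction ${}^*\rad M\neq M$ likewise match the paper's reasoning.
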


\begin{proof}
  Let $\Gamma$ be the set of graded proper submodules of $M$.
  As $M$ is nonzero, $\Gamma$ is nonempty.
  If $\Omega$ is a non-empty chain of $\Gamma$, then $\sum_{N\in\Omega}N\neq M$, and $\sum_{N\in\Omega}N\in\Gamma$.
  Indeed, if $\sum_{N\in\Omega}N=M$, then there exists some $N$ that contains all the
  generators of $M$.
  This implies $N=M\in\Omega\subset\Gamma$ and this is a contradiction.
  So by Zorn's lemma, $\Gamma$ has a maximal element, and this is what we wanted to prove.
\end{proof}

\begin{corollary}\label{*maximal-exists.cor}
  Let $B$ be as above, and assume that $B\neq 0$.
  Then $B$ has a left ${}^*$maximal ideal.
  In particular, $B\neq{}^*\rad B$.
  \qed
\end{corollary}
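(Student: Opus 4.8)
The plan is to derive this immediately from Lemma~\ref{*maximal-exists.lem} by taking $M$ to be $B$ regarded as a graded left module over itself. First I would observe that $B$, viewed as a graded left $B$-module, is finitely generated: it is generated by the single homogeneous element $1\in B_0$. Since $B\neq 0$ by hypothesis, $B$ is then a nonzero finitely generated graded left $B$-module, so Lemma~\ref{*maximal-exists.lem} applies and produces a ${}^*$maximal graded submodule of $B$.

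Next I would record the (essentially tautological) identification that a graded submodule of $B$, considered as a left module over itself, is precisely a left graded ideal of $B$, and that a ${}^*$maximal such submodule is exactly a left ${}^*$maximal ideal in the sense defined above. Hence $B$ has a left ${}^*$maximal ideal. For the ``in particular'' clause, I would note that under this same identification ${}^*\rad M$ for $M=B$ is the intersection of all the left ${}^*$maximal ideals of $B$, i.e.\ it is ${}^*\rad B$; thus the conclusion $M\neq{}^*\rad M$ of Lemma~\ref{*maximal-exists.lem} reads as $B\neq{}^*\rad B$.

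The only point requiring any care — and it is not really an obstacle — is to make sure Lemma~\ref{*maximal-exists.lem} is being invoked with the correct module structure, namely $B$ acting on itself on the left, and that ``finitely generated'' there is read relative to that structure. Beyond checking this bookkeeping, no genuine difficulty is anticipated; the corollary is a direct specialization of the lemma.
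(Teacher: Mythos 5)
Your argument is exactly the intended one: the paper states the corollary with no written proof (a bare \verb|\qed|) precisely because it is the special case $M=B$ of Lemma~\ref{*maximal-exists.lem}, with graded left submodules of $B$ identified with left graded ideals. Your bookkeeping about the module structure and the identification of ${}^*\rad B$ is correct and matches the paper's implicit reasoning.
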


\begin{lemma}
  Let $B$ be as above, $i\in\ZZ$, and $b\in B_i$.
  Then the following are equivalent.
  \begin{enumerate}
  \item[\rm(1)] For any $c\in B_{-i}$, $1+cb\in B_0^\times$.
  \item[\rm(2)] For any $c\in B_{-i}$, $1+cb\in B^\times$.
  \item[\rm(3)] $b\in{}^*\rad B$.
    \item[\rm(4)] $b\in {}^*\rad B\op$, where $B\op$ is the opposite ring of $B$.
    \item[\rm(5)] For any $c\in B_{-i}$, $1+bc\in B^\times$.
    \item[\rm(6)] For any $c\in B_{-i}$, $1+bc\in B_0^\times$.
  \end{enumerate}
\end{lemma}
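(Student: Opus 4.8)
The plan is to prove the cycle of implications by exhibiting the standard cross-characterization of the radical of a graded ring (and its opposite) in terms of "quasi-regular" elements. The six conditions split naturally into a left-handed group $\{(1),(2),(3)\}$ and a right-handed group $\{(4),(5),(6)\}$, and the bridge between the two groups is the observation that ${}^*\rad B$ is a two-sided ideal, which was already recorded in the paragraph preceding Lemma~\ref{*maximal-exists.lem}.

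First I would treat the left-handed group. The implication $(2)\Rightarrow(1)$ is immediate: if $c\in B_{-i}$ then $cb\in B_0$, so $1+cb\in B_0\cap B^\times=B_0^\times$ (a unit of $B$ lying in degree $0$ has its inverse in degree $0$ by comparing graded components). For $(1)\Rightarrow(2)$, given $c\in B_{-i}$, the element $u=1+cb$ is a unit in $B_0$, hence a unit in $B$. For $(3)\Rightarrow(2)$ I would argue as in the classical (ungraded) case: suppose $b\in{}^*\rad B$ but $1+cb\notin B^\times$ for some $c\in B_{-i}$; then $cb\in{}^*\rad B$ (two-sided ideal), and $B(1+cb)$ is a proper graded left ideal (it is graded since $1+cb$ is homogeneous of degree $0$, and proper since $1+cb$ is not a unit, so by Corollary~\ref{*maximal-exists.cor}-style reasoning, or directly: a proper graded left ideal is contained in a ${}^*$maximal one). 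Hence $B(1+cb)$ lies in some ${}^*$maximal left ideal $\fn$; but $b\in{}^*\rad B\subseteq\fn$ forces $cb\in\fn$, so $1=(1+cb)-cb\in\fn$, a contradiction. The reverse $(2)\Rightarrow(3)$ is the heart of the matter: if $b\notin{}^*\rad B$, pick a ${}^*$maximal left ideal $\fn$ with $b\notin\fn$; then $\fn+Bb=B$ by ${}^*$maximality (the left ideal $\fn+Bb$ is graded and strictly larger than $\fn$), so $1=n+c'b$ with $n\in\fn$ homogeneous of degree $0$ and $c'b$ the degree-$0$ component, i.e. $c'\in B_{-i}$; then $1-c'b=n\in\fn$ is not a unit, so taking $c=-c'$ we contradict $(2)$. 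This closes the equivalence $(1)\Leftrightarrow(2)\Leftrightarrow(3)$.

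Next, applying the already-proven equivalence to the opposite ring $B\op$ (which is again a $\ZZ$-graded ring with $(B\op)_i=B_i$, and for which left modules are right $B$-modules) gives that $(4)$ is equivalent to: for all $c\in B_{-i}$, $1+bc\in(B\op)^\times=B^\times$, which is $(5)$; and equivalent to $1+bc\in(B\op)_0^\times=B_0^\times$, which is $(6)$. So $(4)\Leftrightarrow(5)\Leftrightarrow(6)$ for free. Finally, the two groups are joined by the fact, stated in the excerpt just before Lemma~\ref{*maximal-exists.lem}, that ${}^*\rad B$ is a two-sided ideal; more precisely one shows ${}^*\rad B={}^*\rad B\op$ by the symmetric argument. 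Concretely: if $b\in{}^*\rad B$, then $(2)$ holds, so $1+cb\in B^\times$ for all $c\in B_{-i}$; I claim this forces $1+bc\in B^\times$ as well. Indeed if $v(1+cb)=1=(1+cb)v$ then a short computation shows $1+bc$ has inverse $1-bvc$: compute $(1+bc)(1-bvc)=1+bc-bvc-bcbvc=1+b(1-v-cbv)c=1+b(1-(1+cb)v)c=1+b\cdot 0\cdot c=1$, and symmetrically on the other side. Hence $(2)\Rightarrow(5)$, and by the left/right symmetry of the whole setup $(5)\Rightarrow(2)$ as well, tying $(3)$ to $(4)$.

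The main obstacle I anticipate is purely bookkeeping: making sure at each step that the ideals produced are genuinely \emph{graded} so that Lemma~\ref{*maximal-exists.lem} and Corollary~\ref{*maximal-exists.cor} apply, and that a "proper graded left ideal is contained in a ${}^*$maximal graded left ideal" — this last point needs the finite-generation hypothesis, which holds for $B$ itself (it is generated by $1$) but must be invoked with care when passing to quotients like $B/B(1+cb)$; alternatively one applies Zorn directly to the set of graded left ideals containing $B(1+cb)$ but not containing $b$, exactly as in the proof of Lemma~\ref{*maximal-exists.lem}. The algebraic identity $(1+bc)^{-1}=1-b(1+cb)^{-1}c$ used to swap sides is the one genuinely computational ingredient, but it is a one-line verification. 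Everything else is a routine transcription of the classical characterization of the Jacobson radical into the graded setting, with "maximal" replaced by "${}^*$maximal" and scalars restricted to the homogeneous components that keep products in degree $0$.
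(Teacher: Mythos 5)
Your overall strategy is the standard Jacobson-radical characterization, which is also what the paper uses, and most of the steps are correct. However, the implication $(3)\Rightarrow(2)$ as written has a genuine gap. You assume $1+cb\notin B^\times$ and conclude that $B(1+cb)$ is a proper graded left ideal ``since $1+cb$ is not a unit.'' In a general non-commutative ring this inference fails: $B(1+cb)=B$ only says that $1+cb$ has a \emph{left} inverse, which does not by itself make it a unit. What your contradiction argument actually establishes is only that $B(1+cb)=B$, i.e.\ that there is some $d\in B_0$ with $d(1+cb)=1$; you still owe the reader the step that $d$ is also a right inverse of $1+cb$. The paper supplies this by running the same ${}^*$maximal-ideal argument once more: since $d=1-dcb$ with $dcb\in{}^*\rad B$, the element $d$ lies in no ${}^*$maximal left ideal, so $Bd=B$; writing $d'd=1$ and using $d(1+cb)=1$ gives $d'=1+cb$, hence $(1+cb)d=1$ and $1+cb\in B^\times$. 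Without this second pass the argument is incomplete, even though the statement it aims at is correct.

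Apart from this, your proof is essentially the paper's, with one small variation in how the two triples are joined: you bridge $(1)$--$(3)$ to $(4)$--$(6)$ via the explicit identity $(1+bc)^{-1}=1-b(1+cb)^{-1}c$, whereas the paper proves $(3)\Rightarrow(5)$ by observing that ${}^*\rad B$ is two-sided, so $bc\in{}^*\rad B\cap B_0$, and then reuses the already-proved $(3)\Rightarrow(2)$ with $bc$ in place of $b$; the remaining implication $(4)\Rightarrow(2)$ then comes from applying $(3)\Rightarrow(5)$ to $B\op$. Both bridges are standard and equivalent in content; the paper's is marginally shorter because it avoids the explicit computation, while yours makes the swap identity visible. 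Your treatments of $(1)\Leftrightarrow(2)$, $(5)\Leftrightarrow(6)$, $(2)\Rightarrow(3)$, and the reduction of $(4)$--$(6)$ to the opposite ring are all correct and match the paper's.
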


\begin{proof}
  It is obvious that $B_0^\times\subset B_0\cap B^\times$.
  Conversely, if $x\in B_0\cap B^\times$ and $xy=1$ with $y=\sum y_j$ ($y_j\in B_j$),
  then $xy_0=1$, and hence $x\in B_0^\times$.
  So (1)$\Leftrightarrow$(2) and (5)$\Leftrightarrow$(6) are obvious.

  If $b\notin{}^*\rad B$, then $b\notin \fm$ for some ${}^*$maximal left ideal $\fm$ of $B$.
  So $B=Bb+\fm$, and $1+cb\in\fm$ for some $c\in B_{-i}$.
  Thus we have (2)$\Rightarrow$(3).
  Similarly, (5)$\Rightarrow$(4) is proved.
  
  (3)$\Rightarrow$(2).
If $b\in{}^*\rad B$ and $c\in B_{-i}$, then $cb\in(\rad B)_0$.
  So $1+cb\in B_0$ is not contained in any ${}^*$maximal ideal of $B$, and hence
  $B(1+cb)=B$.
  So there exists some $d\in B_0$ such that $d(1+cb)=1$.
  If $d\in\fm$ for some ${}^*$maximal ideal $\fm$ of $B$, then $d+dcb\in\fm$, 
  since $dcb\in{}^*\rad B$.
  This shows that $1=d(1+cb)\in\fm$, which is absurd.
  So $d\in B^\times$, and hence $1+cb\in B^\times$.

  (3)$\Rightarrow$(5).
  As $b\in{}^*\rad B$ and ${}^*\rad B$ is a two-sided ideal, $bc\in{}^*\rad B$.
  So by the assertion (3)$\Rightarrow$(2), which we have already proved, we have
  that $1+bc\in B^\times$.

  (4)$\Rightarrow$(2) follows from (3)$\Rightarrow$(5) above, applied to the
  graded ring $B\op$.
\end{proof}

\begin{lemma}\label{*local.lem}
  Let $B$ be as above.
  Then the following are equivalent.
  \begin{enumerate}
  \item[\rm(1)] $B_0$ is local.
  \item[\rm(2)] $B$ has a unique ${}^*$maximal left ideal.
  \item[\rm(3)] $B$ has a unique ${}^*$maximal right ideal.
  \item[\rm(4)] For each $i\in\ZZ$, $B_i\setminus B^\times$ is an additive subgroup of $B_i$.
  \end{enumerate}
\end{lemma}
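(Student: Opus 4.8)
**Proof plan for Lemma~\ref{*local.lem}.**

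The plan is to prove the cycle $(1)\Rightarrow(2)\Rightarrow(4)\Rightarrow(1)$ together with the left-right symmetric statement $(1)\Leftrightarrow(3)$, the latter being formally identical to $(1)\Leftrightarrow(2)$ applied to the opposite ring $B\op$ (note $(B\op)_0 = B_0\op$, and $B_0$ is local iff $B_0\op$ is local, since locality is characterized by $B_0/\rad B_0$ being a division ring, a left-right symmetric condition by the cited \cite[(19.1)]{Lam}). So the real content is the equivalence of $(1)$, $(2)$, and $(4)$.

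For $(1)\Rightarrow(2)$: assume $B_0$ is local. By Corollary~\ref{*maximal-exists.cor}, $B$ has at least one ${}^*$maximal left ideal. I would show ${}^*\rad B$ is itself ${}^*$maximal, which forces uniqueness. The key observation is that $({}^*\rad B)_0 \subseteq \rad B_0$: indeed, if $b\in({}^*\rad B)_0$ then by the previous lemma (equivalence (3)$\Leftrightarrow$(1), taking $i=0$) we have $1+cb\in B_0^\times$ for all $c\in B_0$, which is exactly the statement that $b\in\rad B_0$ since $B_0$ is local. Conversely I would check that if $b\in\rad B_0 = B_0\setminus B_0^\times$, then $b\in{}^*\rad B$: given any ${}^*$maximal left ideal $\fn$ of $B$, the degree-zero part $\fn_0$ is a proper left ideal of $B_0$ (if $1\in\fn_0$ then $\fn=B$), hence $\fn_0\subseteq\rad B_0$ by maximality considerations... actually more carefully, $\fn\cap B_0$ need not be maximal in $B_0$, so instead I argue directly: if $b\notin{}^*\rad B$, then by the previous lemma there is $c\in B_0$ with $1+cb\notin B_0^\times$, so $cb\notin\rad B_0$, so $b\notin\rad B_0$. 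Thus $({}^*\rad B)_0 = \rad B_0$. Now let $\fn$ be any ${}^*$maximal left ideal; then $B/\fn$ is a graded simple module, so ${}^*\rad B\subseteq\fn$, and it remains to see $\fn\subseteq{}^*\rad B$, equivalently $\fn = {}^*\rad B$ for every such $\fn$, i.e.\ $B/{}^*\rad B$ has no proper nonzero graded submodule. Here I use that $(B/{}^*\rad B)_0 = B_0/\rad B_0$ is a division ring, and that $B/{}^*\rad B$ is generated in degree $0$ by $1$ — wait, that last point needs care since ${}^*\rad B$ might contain things that kill positive-degree generators. The cleanest route: show any graded left ideal $\fn\supsetneq{}^*\rad B$ contains a homogeneous element $b\notin{}^*\rad B$; if $b$ has degree $i$, the previous lemma gives $c\in B_{-i}$ with $1+cb\notin B^\times$, but $cb\in\fn$ while... hmm, this shows $\fn$ is "large" but not immediately that $\fn = B$.

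I expect the main obstacle to be exactly this point: upgrading "$b\notin{}^*\rad B$" to "$Bb + {}^*\rad B = B$" in the graded setting when $b$ has nonzero degree. The fix is to pass through $B_0$: if $b\in B_i\setminus{}^*\rad B$ then some $1+cb\notin B_0^\times = \rad B_0\sqcup(\text{nothing})$... since $B_0$ local, $1+cb\notin B_0^\times$ means $1+cb\in\rad B_0 = ({}^*\rad B)_0\subseteq{}^*\rad B$, hence $1 = (1+cb) - cb \in {}^*\rad B + Bb + \fn = \fn$, contradiction — so in fact $(2)$ follows because any ${}^*$maximal $\fn$ must equal ${}^*\rad B$. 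For $(2)\Rightarrow(4)$: if $B_i\setminus B^\times$ were not an additive subgroup of $B_i$, pick $a,b\in B_i\setminus B^\times$ with $a+b\in B^\times$; using the unique ${}^*$maximal left ideal $\fn = {}^*\rad B$ and the characterization of the previous lemma, $a,b\notin B^\times$ with $a$ (say) of degree $i$ means there is $c\in B_{-i}$ with $1+ca\notin B^\times$, i.e.\ $ca\notin{}^*\rad B$ — I would instead directly show $B_i\cap(B\setminus B^\times) = B_i\cap{}^*\rad B$ when there's a unique ${}^*$maximal ideal, making additivity clear since ${}^*\rad B$ is a submodule. Finally $(4)\Rightarrow(1)$ is the case $i=0$, giving that $B_0\setminus B_0^\times \supseteq B_0\setminus B^\times$ is an additive subgroup (and $B_0\cap B^\times = B_0^\times$ as shown in the proof of the previous lemma), so $B_0$ is local provided $B_0\neq 0$, which holds since $B\neq 0$ forces $B_0\neq 0$ only if $B$ is non-negatively graded — in general $B_0$ could be zero, so I should note that $(4)$ with $i=0$ reading "$B_0\setminus B^\times$ is an additive subgroup" already presupposes the convention making this meaningful, or simply observe that if $B_0 = 0$ then $B$ has no ${}^*$maximal left ideal contradicting Corollary~\ref{*maximal-exists.cor}, so all four conditions implicitly carry $B_0\neq0$; I would add a sentence clarifying this.
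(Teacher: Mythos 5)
Your proof is correct, but the step $(1)\Rightarrow(2)$ takes a genuinely different and longer route than the paper's. You show that ${}^*\rad B$ is itself ${}^*$maximal (hence the unique ${}^*$maximal left ideal) by first establishing the identity $({}^*\rad B)_0 = \rad B_0$ and then using it to upgrade ``$b\notin{}^*\rad B$'' (for homogeneous $b$ of any degree) to ``$Bb+{}^*\rad B=B$''. The paper instead argues by contradiction directly: if $\fm_1\neq\fm_2$ are two ${}^*$maximal left ideals then $\fm_1+\fm_2=B$, so in degree $0$ one may write $1=a_1+a_2$ with $a_i\in\fm_i\cap B_0$ non-units, contradicting locality of $B_0$. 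The paper's argument is shorter; yours does more work but also isolates the useful fact $({}^*\rad B)_0 = \rad B_0$, a statement the paper essentially reproves inside the proof of Lemma~\ref{T-rad.lem}. Your treatments of $(2)\Rightarrow(4)$ (identify $B_i\setminus B^\times$ with $B_i\cap{}^*\rad B$), $(4)\Rightarrow(1)$, and $(1)\Leftrightarrow(3)$ (pass to $B\op$) all match the paper.

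Two small remarks. First, both your write-up and the paper's proof of $(2)\Rightarrow(4)$ implicitly use that a homogeneous element with a one-sided inverse is a two-sided unit under hypothesis $(2)$; this is true but deserves a line (idempotent $bc$ is homogeneous of degree $0$, so $Bbc$ and $B(1-bc)$ are graded left ideals, and the unique ${}^*$maximal ideal forces $bc=1$). Second, your worry at the end that ``$B_0$ could be zero in general'' is a red herring: since $1\in B_0$ always, $B_0=0$ forces $B=0$, so whenever $B\neq0$ one automatically has $B_0\neq0$, and if $B=0$ then $B_i\setminus B^\times=\emptyset$ is not an additive subgroup, so $(4)$ fails as it should.
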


\begin{proof}
  (1)$\Rightarrow$(2).
  If $B=0$, then $B_0=0$, and $B_0$ is not local.
  So $B\neq 0$, and $B$ has a ${}^*$maximal left ideal by Lemma~\ref{*maximal-exists.lem}.
  If $B$ has two ${}^*$maximal left ideals $\fm_1$ and $\fm_2$ with $\fm_1\neq\fm_2$,
  then there exists some $a_1\in\fm_1\cap B_0$ and $a_2\in\fm_2\cap B_0$ such that
  $a_1+a_2=1$, and $B_0$ is not local.

  (2)$\Rightarrow$(4).
  Let $\fm$ be the unique ${}^*$maximal left ideal.
  If $b\in B_i\setminus B^\times$, then $b\in Bb\subset\fm$, and
  $B_i\setminus B^\times=B_i\cap \fm$.

  (4)$\Rightarrow$(1).
  This is trivial.

  (1)$\Leftrightarrow$(3) follows from (1)$\Leftrightarrow$(2), which already has been proved,
  applied to $B\op$.
\end{proof}

\paragraph
We say that $B$ is ${}^*$local if $B$ satisfies the equivalent conditions in Lemma~\ref{*local.lem}.

\begin{lemma}[graded Nakayama's lemma]\label{*NAK.lem}
  Let $B$ be as above, and $J={}^*\rad B$.
  If $M$ is a finitely generated graded left $B$-module and $JM=M$, then $M=0$.
\end{lemma}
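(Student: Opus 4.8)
The plan is to run the classical proof of Nakayama's lemma in the graded setting. I argue by contradiction: suppose $M\neq 0$. By Lemma~\ref{*maximal-exists.lem}, $M$ has a ${}^*$maximal graded submodule $N$, and then $\bar M:=M/N$ is \emph{${}^*$simple}, i.e.\ nonzero with no proper nonzero graded submodule (graded submodules of $\bar M$ correspond bijectively to graded submodules of $M$ lying between $N$ and $M$, and there are none strictly in between). It then suffices to prove $J\bar M=0$: granting this, $JM\subseteq N\subsetneq M$, contradicting the hypothesis $JM=M$, and so $M=0$.

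To prove $J\bar M=0$ I would fix a nonzero homogeneous element $\bar x\in\bar M$. Since $B\bar x$ is a nonzero graded submodule of the ${}^*$simple module $\bar M$, it is all of $\bar M$, so $b\mapsto b\bar x$ gives a surjection of graded left $B$-modules $\phi\colon B(d)\twoheadrightarrow\bar M$ for a suitable shift $d\in\ZZ$. Its kernel is a ${}^*$maximal graded submodule of $B(d)$; since the graded submodules of $B(d)$ are exactly the left ideals of $B$ with the grading shifted, $\ker\phi=\fn(d)$ for some ${}^*$maximal left ideal $\fn$ of $B$. As $J={}^*\rad B\subseteq\fn$ by the very definition of ${}^*\rad B$, every homogeneous $j\in J$ is killed by $\phi$, i.e.\ $j\bar x=0$. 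Since $\bar M$ is generated by homogeneous elements, $J\bar M=0$.

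An essentially equivalent, more hands-on route would be induction on the number $n$ of homogeneous generators $x_1,\dots,x_n$ of $M$, the case $n=0$ being trivial. From $JM=M$, and using that ${}^*\rad B$ is a graded two-sided ideal (as noted before the statement), one writes $x_n=\sum_i c_i x_i$ with each $c_i$ homogeneous in $J$ of degree $\deg x_n-\deg x_i$; in particular $c_n\in({}^*\rad B)_0$. By the lemma immediately preceding Lemma~\ref{*local.lem}, applied with $i=0$ and $c=-1$, the element $1-c_n$ lies in $B_0^\times$, whence $(1-c_n)x_n=\sum_{i<n}c_ix_i$ forces $x_n\in Bx_1+\cdots+Bx_{n-1}$. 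Thus $M$ is generated by $n-1$ homogeneous elements and still satisfies $JM=M$, so induction finishes the proof.

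I do not expect a genuine obstacle here. The only point needing care — and it is already flagged in the text — is the bookkeeping with the degree-shift functor: that $({}^*\rad B)(d)={}^*\rad(B(d))$ and that the ${}^*$maximal graded submodules of $B(d)$ are precisely the $\fn(d)$ with $\fn$ a ${}^*$maximal left ideal of $B$. Once this is recorded, either argument goes through verbatim as in the ungraded case.
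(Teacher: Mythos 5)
Your proposal is correct, and both of your routes work.

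Your first route is essentially the paper's argument in different clothing: you show that $J$ annihilates the ${}^*$simple quotient $\bar M=M/N$, which is precisely the assertion the paper extracts from its remark (recorded just before Lemma~\ref{*maximal-exists.lem}) that a graded homomorphism carries ${}^*\rad$ into ${}^*\rad$. The paper applies that remark to the maps $x\mapsto xm\colon B\to M(-\deg m)$ for homogeneous $m$, concludes $JM\subset{}^*\rad M$ directly, and then invokes Lemma~\ref{*maximal-exists.lem} to force $M=0$. You unwind the same content by hand: $\ker\phi$ is a ${}^*$maximal graded left ideal (up to shift), hence contains $J$, hence $J\bar x=0$; passing through a single ${}^*$maximal $N$ rather than the full ${}^*\rad M$ is an inessential variation.

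Your second route is a genuinely different, more elementary argument: the classical determinant-free Nakayama induction on the number of homogeneous generators. The bookkeeping you flag is exactly the right thing to check and is handled correctly — taking the degree-$\deg x_n$ homogeneous component of a relation $x_n=\sum_i c_ix_i$ with $c_i\in J$ produces homogeneous $c_i$ of degree $\deg x_n-\deg x_i$, so in particular $c_n\in J_0=({}^*\rad B)_0$, and the unlabeled unit criterion for ${}^*\rad B$ (the lemma immediately preceding Lemma~\ref{*local.lem}), with $i=0$ and $c=-1$, gives $1-c_n\in B_0^\times$. This route avoids any appeal to the functoriality of ${}^*\rad$ and substitutes a generator-by-generator computation; the paper's proof is shorter because it piggybacks on a fact it has already recorded.
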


\begin{proof}
  For any homogeneous element $m$ of $M$, $x\mapsto xm$ is a graded homomorphism of left $B$-modules
  $B\rightarrow M(-\deg m)$, and ${}^*\rad B$ is mapped to ${}^*\rad M$ by this map.
  This shows that $JM\subset {}^*\rad M\subset M$.
  By assumption, we have that ${}^*\rad M=M$.
  By Lemma~\ref{*maximal-exists.lem}, we have that $M=0$.
\end{proof}

\section{Main results}

\begin{lemma}\label{semi-perfect.lem}
  Let $(A,\fm)$ be a Henselian local ring, and $\Lambda$ a module-finite \(non-commutative\) $A$-algebra.
  Then $\Lambda$ is semi-perfect, and the category of finite left $\Lambda$-modules is Krull--Schmidt.
\end{lemma}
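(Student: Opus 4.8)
The plan is to reduce to a known module-finite algebra statement over a complete (hence Henselian) local ring, and then use two classical tools: the Krull--Schmidt property over such rings, and the characterization of semi-perfect rings via idempotents. First I would recall that if $(A,\fm)$ is Henselian local and $\Lambda$ is a module-finite $A$-algebra, then $\Lambda/\rad\Lambda$ is a semisimple Artinian ring (because $\Lambda/\fm\Lambda$ is a finite-dimensional algebra over the field $A/\fm$, and $\fm\Lambda\subset\rad\Lambda$ by a Nakayama-type argument since $\Lambda$ is module-finite over $A$). The key point is then that idempotents of $\Lambda/\rad\Lambda$ lift to $\Lambda$: this is where Henselianness enters, via Hensel's lemma applied to the polynomial $t^2-t$ together with the fact that $\rad\Lambda$ is contained in the Jacobson radical and that $\Lambda$ is $\fm$-adically complete-enough — more precisely, one uses that $A$ Henselian implies idempotents lift along the surjection $\Lambda\twoheadrightarrow\Lambda/\fm\Lambda$ (a standard consequence of Henselianness for module-finite algebras, cf.\ the treatment of Henselian pairs), and then lifts idempotents along the nilpotent-radical quotient $\Lambda/\fm\Lambda\twoheadrightarrow\Lambda/\rad\Lambda$ by the usual elementary argument. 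Having a semisimple $\Lambda/\rad\Lambda$ together with idempotent lifting is exactly the definition of $\Lambda$ being semi-perfect.

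Next I would deduce the Krull--Schmidt property for finite left $\Lambda$-modules from semi-perfectness. The standard route: a ring is semi-perfect if and only if the identity is a finite sum of orthogonal local idempotents, equivalently every finitely generated module has a projective cover. For the Krull--Schmidt conclusion one argues that for $M$ a finite $\Lambda$-module, $\End_\Lambda(M)$ is semi-perfect — this follows because $\End_\Lambda(M)$ is module-finite over the Noetherian complete-ish center and, more directly, because $\End_\Lambda(M)/\rad$ is semisimple Artinian (it embeds into a finite-dimensional algebra over $A/\fm$ after reducing mod $\fm$) and idempotents lift (again by Henselianness, since $\End_\Lambda(M)$ is a module-finite $A$-algebra). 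Once every $\End_\Lambda(M)$ is semi-perfect, $M$ is indecomposable iff $\End_\Lambda(M)$ is local, and a finite module has only finitely many summands in any decomposition (bounded by the length of $M$ as a module over the Artinian ring $\Lambda/\fm^n\Lambda$ for suitable $n$, or by generator count via Nakayama); these two facts are precisely the hypotheses of the classical Krull--Schmidt--Azumaya theorem, giving existence and uniqueness of decompositions into indecomposables.

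The main obstacle I anticipate is the idempotent-lifting step: one must be careful that \emph{Henselian} (not complete) is enough. The cleanest way around this is to invoke the fact that if $(A,\fm)$ is Henselian and $\Lambda$ is module-finite over $A$, then $(\Lambda,\fm\Lambda)$ is a Henselian pair in the noncommutative sense relevant here — concretely, monic factorizations and idempotents lift — which can either be cited from the literature on Henselian pairs and module-finite algebras (e.g.\ the noncommutative Hensel lemma in Lam or in the standard references on semiperfect rings over Henselian base) or proved directly: a finite commutative $A$-subalgebra generated by the coordinates of a putative lift is again Henselian local or a finite product of such, and idempotents live in the center of that subalgebra. Everything else — semisimplicity of the radical quotient, lifting along nilpotent ideals, finiteness of the number of summands — is routine. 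I would therefore structure the proof as: (i) $\rad\Lambda\supseteq\fm\Lambda$ and $\Lambda/\rad\Lambda$ semisimple; (ii) idempotents lift (the Henselian input); (iii) conclude semi-perfect; (iv) $\End_\Lambda(M)$ is semi-perfect for every finite $M$; (v) invoke Krull--Schmidt--Azumaya.
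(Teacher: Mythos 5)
Your proposal is correct and follows essentially the same route as the paper: the key steps --- $\fm\Lambda\subseteq\rad\Lambda$ and semisimplicity of $\Lambda/\rad\Lambda$, idempotent lifting via Henselianness (the paper cites Leuschke--Wiegand, Theorem~A.30, for this), then concluding semi-perfectness and hence the Krull--Schmidt property --- coincide with the paper's. The only cosmetic difference is that the paper packages semi-perfectness by explicitly constructing a projective cover $\Lambda e\to V$ for each simple $V$ and then cites Krause's Proposition~4.1 for Krull--Schmidt, whereas you use the ``semisimple radical quotient plus idempotent lifting'' characterization and make explicit the (correct, and implicitly needed) observation that $\End_\Lambda(M)$ is itself a module-finite $A$-algebra, hence semi-perfect, before invoking Krull--Schmidt--Azumaya.
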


For Henselian local rings, see \cite[Section~A.3]{LW} and \cite{Milne}.

\begin{proof}
  Let $V$ be a simple left $\Lambda$-module.
  Let $\bar \Lambda=\Lambda/\rad\Lambda$.
  As $\fm\Lambda\subset\rad\Lambda$ (by Nakayama's lemma applied to the finite $A$-module $\Lambda$),
  we have that $\bar\Lambda$ is a finite-dimensional $A/\fm$-algebra
  with the trivial radical.
  So $\bar\Lambda$ is semi-simple.
  As $V$ is a simple $\bar\Lambda$-module, there exists some idempotent $\bar e$ of $\bar\Lambda$ such that
  $V=\bar\Lambda\bar e$.
  By \cite[Theorem~A.30]{LW}, $\bar e$ lifts to an idempotent $e$ of $\Lambda$.
  Let $P=\Lambda e$.
  Then $P/JP=(\Lambda/J)e=\bar \Lambda\bar e=V$.
  By Nakayama's lemma, $JP$ is the unique maximal submodule of $P$, and hence
  the canonical surjective map $P\rightarrow P/JP\cong V$ is a projective cover
  by \cite[Lemma~3.6]{Krause}.
  So $\Lambda$ is semi-perfect, and the category of left $\Lambda$-modules is Krull--Schmidt,
  see \cite[Proposition~4.1]{Krause}.
\end{proof}

\paragraph
Let $R=\bigoplus_{i>0}R_i$ be a Noetherian $\ZZ_{\geq 0}$-graded commutative ring such that
$(R_0,\fm_0)$ is Henselian local.
We set $\fm=R_++\fm_0$, where $R_+=\bigoplus_{i>0}R_i$.

\begin{lemma}\label{Krull.lem}
  Let $M$ be a finite graded $R$-module.
  Then $\bigcap_{r\geq 1}\fm^rM=0$.
\end{lemma}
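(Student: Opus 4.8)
The plan is to trim the intersection with the Artin--Rees lemma and then kill the leftover with the graded Nakayama lemma (Lemma~\ref{*NAK.lem}). Put $N=\bigcap_{r\ge 1}\fm^rM$. Since $\fm$ and $M$ are graded, each $\fm^rM$ is a graded submodule of $M$, hence so is the intersection $N$; and since $R$ is Noetherian and $M$ is finite, $N$ is a finite graded $R$-module.

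First I would apply the Artin--Rees lemma to the ideal $\fm$ and the submodule $N\subseteq M$: there is an integer $c\ge 1$ with $\fm^rM\cap N=\fm^{\,r-c}(\fm^cM\cap N)$ for all $r\ge c$. Because $N\subseteq\fm^rM$ for every $r$, the left-hand side is just $N$, so $N=\fm^{\,r-c}(\fm^cM\cap N)\subseteq\fm N$ for $r>c$; the reverse inclusion being obvious, $\fm N=N$.

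It then remains to deduce $N=0$ from $\fm N=N$. One cannot simply quote the usual Nakayama lemma, since $\fm$ need not lie in the Jacobson radical of $R$ (e.g.\ for $R=R_0[x]$ the maximal ideal $(\fm_0,x-1)$ does not contain $\fm=(\fm_0,x)$). So I would instead check that $\fm={}^*\rad R$: as $(R_0,\fm_0)$ is local, $R$ is ${}^*$local by Lemma~\ref{*local.lem}, hence has a unique ${}^*$maximal ideal, which is ${}^*\rad R$; and $\fm$ is ${}^*$maximal because $R/\fm\cong R_0/\fm_0$ is a field. Now the graded Nakayama lemma (Lemma~\ref{*NAK.lem}) applied to the finite graded module $N$ with $({}^*\rad R)N=N$ gives $N=0$.

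The only genuine subtlety is this last step — noticing that the ungraded Nakayama lemma is not available and rerouting through ${}^*$locality and Lemma~\ref{*NAK.lem}; the Artin--Rees step is standard. If one prefers to avoid Artin--Rees, one can instead argue degreewise: fixing $d$ and using that $M_e=0$ for $e\ll 0$, a short expansion $\fm^r=\sum_j\fm_0^{\,r-j}R_+^{\,j}$ shows $(\fm^rM)_d\subseteq\fm_0^{\,r-c'}M_d$ once $r$ is large (with $c'$ depending only on $d$), so $N_d\subseteq\bigcap_s\fm_0^sM_d$, which is $0$ by the classical Krull intersection theorem in the Noetherian local ring $(R_0,\fm_0)$ applied to the finite $R_0$-module $M_d$.
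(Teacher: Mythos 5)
Your proof is correct and follows essentially the same route as the paper's: Artin--Rees to get $\fm N=N$, then the graded Nakayama lemma (Lemma~\ref{*NAK.lem}) to conclude $N=0$. You do fill in a step the paper leaves implicit, namely that $\fm$ is the unique ${}^*$maximal ideal ${}^*\rad R$ (via Lemma~\ref{*local.lem} and $R/\fm\cong R_0/\fm_0$), which is exactly why Lemma~\ref{*NAK.lem} applies; the cautionary remark that $\fm$ need not lie in the ordinary Jacobson radical is well taken. The degreewise alternative you sketch, reducing to the classical Krull intersection theorem over $(R_0,\fm_0)$ on each finite $R_0$-module $M_d$, is also a valid and more elementary variant that avoids the graded Nakayama lemma entirely.
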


\begin{proof}
  Let $N=\bigcap_{r\geq 1}\fm^rM$.
  Note that $N$ is a finite graded $R$-submodule of $M$.
  By Artin--Rees lemma, there exists some $c\geq 1$ such that
  \[
  N=N\cap \fm^{c+1}M=\fm(N\cap \fm^cM)\subset \fm N\subset N.
  \]
  As $\fm N=N$, $N=0$ by graded Nakayama's lemma.
\end{proof}

Let $T=\bigoplus_{i\in\ZZ}T_i$ be a graded $R$-algebra which is a finite $R$-module.
Let $J$ be the ${}^*$radical ${}^*\rad T$ of $T$.

\begin{lemma}\label{J^r.lem}
  $J^r\subset \fm T\subset J$ for some $r\geq 1$.
\end{lemma}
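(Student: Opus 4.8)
The plan is to establish the two inclusions $\fm T\subseteq J$ and $J^r\subseteq\fm T$ separately, each by an application of the graded Nakayama lemma (Lemma~\ref{*NAK.lem}).

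For $\fm T\subseteq J$ I would first note that $\fm\subseteq{}^*\rad R$: by the characterization of the ${}^*$radical, an element of $\fm_0$ lies in ${}^*\rad R$ because $1+cx\in R_0^\times$ for all $c\in R_0$ (as $R_0$ is local), while an element of $R_+$ lies in ${}^*\rad R$ vacuously, $R$ having no negative components. Now take an arbitrary ${}^*$maximal left ideal $\fn$ of $T$ and set $V=T/\fn$, a ${}^*$simple graded left $T$-module which is finitely generated over $T$, hence over $R$. The graded $T$-submodule $\fm V\subseteq V$ is $0$ or $V$; if it were $V$, then $({}^*\rad R)V=V$ and graded Nakayama over $R$ would force $V=0$, a contradiction. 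So $\fm V=0$, that is $\fm T\subseteq\fn$; intersecting over all ${}^*$maximal left ideals $\fn$ gives $\fm T\subseteq{}^*\rad T=J$.

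For $J^r\subseteq\fm T$, set $\bar T=T/\fm T$. Since $T$ is module-finite over $R$ and $R/\fm=R_0/\fm_0=:k$ is a field, $\bar T$ is a finite-dimensional $\ZZ$-graded $k$-algebra, in particular Artinian. Writing $\bar J$ for the image of $J$ in $\bar T$, I claim $\bar J\subseteq{}^*\rad\bar T$: the preimage in $T$ of any ${}^*$maximal left ideal of $\bar T$ is a ${}^*$maximal left ideal of $T$ and therefore contains $J$, so its image contains $\bar J$; intersecting over all such ideals yields the claim. It thus suffices to prove that $I:={}^*\rad\bar T$ is nilpotent. The descending chain of graded two-sided ideals $I\supseteq I^2\supseteq\cdots$ stabilizes because $\bar T$ is Artinian, say $I^{n}=I^{n+1}$; then $I\cdot I^{n}=I^{n}$, and as $I^{n}$ is a finitely generated graded left $\bar T$-module, graded Nakayama gives $I^{n}=0$. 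Hence $\bar J^{n}=0$, i.e. $J^{n}\subseteq\fm T$; one takes $r=n$, which is at least $1$ unless $T=0$ (in which case $r=1$ works trivially).

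The genuinely non-formal point is the nilpotence of ${}^*\rad\bar T$, which is the graded counterpart of the statement that the Jacobson radical of an Artinian ring is nilpotent; but the standard argument goes through verbatim once the descending chain $I\supseteq I^2\supseteq\cdots$ is fed into the graded Nakayama lemma. Everything else — the two Nakayama applications and the identification of the ${}^*$maximal left ideals of $\bar T$ with those of $T$ containing $\fm T$ — is routine.
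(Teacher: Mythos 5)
Your proof is correct and takes essentially the same route as the paper: both inclusions are obtained from the graded Nakayama lemma, with $\fm T\subseteq J$ coming from the observation that $\fm\subseteq{}^*\rad R$ (which the paper leaves implicit), and $J^r\subseteq\fm T$ coming from passing to $\bar T=T/\fm T$, stabilizing the descending chain of powers of the radical in the finite-dimensional algebra $\bar T$, and applying graded Nakayama once more. You simply spell out the details (the characterization of $\fm$ inside ${}^*\rad R$, and the compatibility of ${}^*$maximal ideals with the quotient $\bar T$) that the paper's terse ``we may assume $\fm=0$'' compresses.
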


\begin{proof}
  If $\fm T\not\subset J$, then there exists 
  some ${}^*$maximal left ideal $\fn$ of $T$ such that $\fn+\fm T=T$.
  By graded Nakayama's lemma, $\fn=T$, and this is absurd.
  So $\fm T\subset J$.

  To prove that $J^r\subset \fm T$ for some $r\geq 1$, we may assume that $\fm=0$.
  Then $T$ is a finite-dimensional $R_0/\fm_0$-algebra.
  As $J^r=J^{r+1}$ for some $r\geq 1$, $J^r=0$ by graded Nakayama's lemma again.
\end{proof}

\begin{lemma}\label{idempotent.lem}
  Let $e$ be an idempotent of $\hat T$, where $\hat T$ is the $\fm$-adic completion of $T$.
  If $e\in J\hat T=\hat J$, then $e=0$.
  In particular, if $e$ is an idempotent of $T$ such that $e\in J$, then $e=0$.
\end{lemma}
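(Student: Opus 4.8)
The plan is to reduce the statement about $\hat J = J\hat T$ to the analogous, already-available facts over $T/\fm T$, which is a finite-dimensional algebra over the field $R_0/\fm_0$. First I would observe that by Lemma~\ref{J^r.lem} there is an $r\geq 1$ with $J^r \subset \fm T$, and hence $\hat J^{\,r} \subset \fm\hat T$. So if $e$ is an idempotent of $\hat T$ lying in $\hat J$, then $e = e^r \in \hat J^{\,r} \subset \fm\hat T$. Thus it suffices to show that the only idempotent of $\hat T$ lying in $\fm\hat T$ is $0$.

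For this, note $\fm\hat T \subset \rad\hat T$: indeed $\hat T$ is module-finite over the complete (hence Henselian) local ring $\hat{R_0} \cong R_0$ — more precisely, $\hat T$ is module-finite over the completion of $R$, whose degree-zero part is the Henselian local ring $R_0$ — so $\fm\hat T$ is contained in the Jacobson radical by Nakayama's lemma applied to the finite module $\hat T$. An idempotent $e \in \rad\hat T$ must satisfy $1 - e \in \hat T^\times$; multiplying $e(1-e) = 0$ by the inverse of $1-e$ gives $e = 0$. This disposes of the first assertion.

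For the final sentence, let $e$ be an idempotent of $T$ with $e \in J$. Then $e = e^r \in J^r \subset \fm T$, and I claim $e = 0$. One route is to pass to $\hat T$: the image of $e$ in $\hat T$ is an idempotent lying in $\hat J$, hence is $0$ by what was just proved; since $T \to \hat T$ is injective on finite $R$-modules by Lemma~\ref{Krull.lem} (applied to $T$ as a finite graded $R$-module, so $\bigcap_r \fm^r T = 0$), we conclude $e = 0$ in $T$. Alternatively, and more directly, $\fm T \subset {}^*\rad T = J$ is a graded two-sided ideal, so reducing mod $\fm T$ we get an idempotent $\bar e$ in $T/\fm T$ lying in $J/\fm T = {}^*\rad(T/\fm T)$, which is nilpotent (Lemma~\ref{J^r.lem}); a nilpotent idempotent is $0$, so $e \in \fm T$, and then iterating $e = e^r \in \fm^r T$ for all $r$ together with Lemma~\ref{Krull.lem} forces $e = 0$.

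The only mild subtlety — and the step I would be most careful about — is identifying the degree-zero part of the $\fm$-adic completion of $R$ so as to legitimately invoke Nakayama over a local ring: one wants that $\hat T$ is a module-finite algebra over a (complete, hence Henselian) local ring with the image of $\fm$ in its radical. Since $R$ is $\ZZ_{\geq 0}$-graded with $(R_0,\fm_0)$ local and $\fm = \fm_0 + R_+$, the $\fm$-adic completion $\hat R$ is a Noetherian complete semilocal — in fact local, with maximal ideal $\hat\fm$ — ring, $\hat T$ is module-finite over $\hat R$, and $\fm\hat T \subset \hat\fm\,\hat T \subset \rad\hat T$. Granting this, the argument is routine; if one prefers to avoid it entirely, the purely graded argument in the previous paragraph (reduction mod $\fm T$, nilpotence of the radical of a finite-dimensional algebra, then Lemma~\ref{Krull.lem}) already proves both assertions, since an idempotent of $\hat T$ in $\hat J$ satisfies $e \in \hat\fm^{\,r}\hat T$ for all $r$ and $\bigcap_r \hat\fm^{\,r}\hat T = 0$.
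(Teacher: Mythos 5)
Your argument is correct, and the crucial first step is the same as the paper's: invoke Lemma~\ref{J^r.lem} to get $J^r\subset\fm T$, so that $e=e^r\in\hat J^{\,r}\subset\fm\hat T$. Where you diverge is in finishing from $e\in\fm\hat T$: your primary route shows $\fm\hat T\subset\rad\hat T$ (via $\hat R$ being complete local and Nakayama) and then uses the standard fact that an idempotent in the Jacobson radical is zero (since $1-e$ is a unit). The paper instead finishes with a bare-hands Krull-intersection argument: from $e\in\fm\hat T$ it deduces $e=e^n\in\fm^n\hat T$ for every $n$, and $\bigcap_n\fm^n\hat T=0$. You do note this second route yourself in your final paragraph, so you are aware of both. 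The paper's choice is slightly leaner in that it never needs the containment $\fm\hat T\subset\rad\hat T$ nor any statement about units in $\hat T$ --- only Krull intersection in the finite $\hat R$-module $\hat T$ --- which sidesteps exactly the ``mild subtlety'' about the local structure of $\hat R$ that you flag. Your handling of the final sentence (injectivity of $T\to\hat T$, which you correctly tie to Lemma~\ref{Krull.lem}) matches the paper. Also, be careful with the aside ``$\hat{R_0}\cong R_0$'': the $\fm$-adic completion $\hat R$ is not graded, so ``degree-zero part of $\hat R$'' is not meaningful; your later, corrected formulation ($\hat R$ is a Noetherian complete local ring and $\hat T$ is module-finite over it) is the right one.
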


\begin{proof}
  By Lemma~\ref{J^r.lem}, we can take $r\geq 1$ such that $J^r\subset \fm T$.
  So $e=e^r\in \fm \hat T$.
  As $e=e^n\in \fm^n \hat T$ for any $n\geq 1$, so $e\in \bigcap_{n\geq 1}\fm^n \hat T=0$.
  Since $T\rightarrow \hat T$ is injective, the last assertion follows immediately.
\end{proof}

\begin{lemma}\label{T-rad.lem}
  If $T$ is ${}^*$local, then $J=\rad T_0+\bigoplus_{i\neq 0}T_i$,
  and $T/J\cong T_0/\rad T_0$ is a division ring.
\end{lemma}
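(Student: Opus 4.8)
The plan is to identify $J={}^*\rad T$ with $\fn:=\rad T_0+\bigoplus_{i\neq 0}T_i$; the description of $T/J$ then follows at once. The main tool is the earlier degreewise criterion (the equivalence of conditions (1) and (3) in the lemma with six equivalent conditions above): for $b\in T_i$ one has $b\in{}^*\rad T$ if and only if $1+cb\in T_0^\times$ for every $c\in T_{-i}$. I would also record one structural fact: since $T$ is a finite module over the non-negatively graded ring $R$, it is bounded below (if $T=\sum_j Rt_j$ with $\deg t_j=d_j$, then $T_m=0$ for $m<\min_j d_j$), and $T_0\neq 0$ because $1\in T_0$. Consequently $T$ has no units in nonzero degrees: a unit $u\in T_i$ with $i\neq 0$ would have nonzero powers $u^{\pm n}$ lying in degrees $\pm ni$, hence in arbitrarily negative degrees, contradicting boundedness below. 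Note also that, $T$ being ${}^*$local, $T_0$ is local by Lemma~\ref{*local.lem}.

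For the inclusion $J\subseteq\fn$: since $J$ is a graded ideal it suffices to check $J\cap T_0\subseteq\rad T_0$, and this is exactly the criterion at $i=0$ together with the usual description of $\rad T_0$; the homogeneous components of $J$ in nonzero degrees lie in $\fn$ trivially. For $\fn\subseteq J$ I would treat the two kinds of homogeneous generators of $\fn$ separately. An element $x\in\rad T_0$ satisfies $1+cx\in T_0^\times$ for all $c\in T_0$, hence lies in ${}^*\rad T$ by the criterion at $i=0$. For $x\in T_i$ with $i\neq 0$, the criterion reduces the claim $x\in J$ to showing $1+cx\in T_0^\times$ for every $c\in T_{-i}$; as $T_0$ is local, this holds as soon as $cx\in\rad T_0$.

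The heart of the matter — and the step I expect to be the only genuine obstacle — is therefore the assertion that $T_{-i}T_i\subseteq\rad T_0$ for $i\neq 0$. Suppose $cx\notin\rad T_0$ for some $c\in T_{-i}$, $x\in T_i$. Since $T_0$ is local, $cx$ is then a unit of $T_0$, and replacing $c$ by $(cx)^{-1}c\in T_{-i}$ we may assume $cx=1$. Now $e:=xc\in T_0$ satisfies $e^2=x(cx)c=xc=e$, so $e\in\{0,1\}$ because $T_0$ is local; but $e=0$ forces $x=x(cx)=(xc)x=0$, incompatible with $cx=1$, so $e=1$, making $x$ a two-sided unit with inverse $c\in T_{-i}$ — impossible since $T$ has no units in nonzero degrees. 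Hence $cx\in\rad T_0$, which gives $\fn\subseteq J$, and combined with the previous paragraph $J=\fn$.

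It remains to compute $T/J=T/\fn$. The composite $T_0\hookrightarrow T\twoheadrightarrow T/\fn$ is a ring homomorphism; it is surjective because every element of $T$ is congruent modulo $\fn$ to its degree-$0$ component, and its kernel is $T_0\cap\fn=\rad T_0$. Therefore $T/J\cong T_0/\rad T_0$, which is a division ring because $T_0$ is local.
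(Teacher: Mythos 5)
Your proof is correct, and it takes a genuinely different route from the paper's. The paper argues by reduction: it replaces $T$ by $T/J$ and $R$ by $R/\fm$ (so that $T$ becomes a finite-dimensional algebra over the field $R_0/\fm_0$ with ${}^*\rad T=0$), observes that a unit in a nonzero degree would force $T$ to be infinite-dimensional, concludes $T_i=0$ for $i\neq 0$, and then reads off the description of $J$ in the reduced setting. You instead work with $T$ itself and lean directly on the degreewise criterion from the six-condition lemma: for $i=0$ it identifies $J_0=\rad T_0$ on the nose, and for $i\neq 0$ it reduces the claim $T_i\subset J$ to the inclusion $T_{-i}T_i\subset\rad T_0$, which you prove by the idempotent trick (if $cx=1$ then $xc$ is a nontrivial idempotent in the local ring $T_0$, forcing $x$ to be a two-sided unit of nonzero degree, impossible by boundedness below of the finite graded module $T$). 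Both proofs ultimately rely on the absence of homogeneous units in nonzero degree; the paper obtains this from finite-dimensionality after the reduction, while you obtain it from boundedness below directly. Your approach avoids the reduction step (which requires a small amount of unwinding to translate the conclusion for $T/J$ back to $T$, in particular to see that $J_0=\rad T_0$) and makes the structural role of the degreewise criterion explicit, at the cost of the extra lemma $T_{-i}T_i\subset\rad T_0$. Both are valid; yours is slightly longer but arguably more transparent about why ${}^*$locality and the grading interact as they do.
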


\begin{proof}
  Replacing $T$ by $T/J$, we may assume that $J=0$.
  Then $\fm T\subset J=0$.
  Replacing $R$ by $R/\fm$, we may assume that $R=R_0$ is a field concentrated in degree zero.
  Then $T$ is a finite-dimensional $R$-algebra.
  If $i\neq 0$, $a\in R_i$, and $a$ is a unit, then $R_{ir}\neq 0$ for $r\in\ZZ$,
  and $R$ cannot be finite-dimensional.
  So $T_i\subset{}^*\rad T$ for $i\neq 0$.
  On the other hand, we have
\[
({}^*\rad T)_0=T_0\setminus T^\times=T_0\setminus T_0^\times=\rad T_0.
\]
  So ${}^*\rad T=\rad T_0+\bigoplus_{i\neq 0}T_i$.
  Hence $T/{}^*\rad T\cong T_0/\rad T_0$.
  As $T_0$ is local, this is a division ring.
\end{proof}

\paragraph
Let $T\grmod$ denote the category of finitely generated graded left $T$-modules.
We say that $M\in T\grmod$ is ${}^*$indecomposable if it is an indecomposable object of
$T\grmod$.
For $M\in\grmod T$, the endomorphism ring of $M$ as an object of $T\grmod$ is
$E_0=(\End_TM)_0$, the degree zero component of $E=\End_TM$, the endomorphism ring of $M$ as
a (non-graded) $T$-module.
Note that $E_0$ is finite as an $R_0$-module.
As we assume that $R_0$ is Henselian local, $E_0$ is semi-perfect, and hence
the additive category $T\grmod$ is Krull--Schmidt by Lemma~\ref{semi-perfect.lem}.
In particular, $M$ is ${}^*$indecomposable if and only if $E$ is ${}^*$local, that is,
$E_0$ is local.

\begin{theorem}\label{main.thm}
  Let $R=\bigoplus_{i\geq 0}R_i$ be a Noetherian commutative graded ring such that $(R_0,\fm_0)$ is Henselian local,
  $R_+=\bigoplus_{i>0}R_i$, $\fm=R_++\fm_0$, and $T$ a $\Bbb Z$-graded module-finite \(non-commutative\) $R$-algebra.
  Let $M$ be a finitely generated graded left $T$-module.
  Then the following are equivalent.
  \begin{enumerate}
  \item[\rm(1)] $\hat M$ is indecomposable as a $\hat T$-module, where $\widehat{(-)}$ denotes
    the $\fm$-adic completion.
  \item[\rm(2)] $M_\fm$ is indecomposable as a $T_\fm$-module.
  \item[\rm(3)] $M$ is indecomposable as a $T$-module.
  \item[\rm(4)] $M$ is indecomposable as a graded $T$-module.
  \end{enumerate}
\end{theorem}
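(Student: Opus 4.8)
The plan is to reduce all four conditions to the non-existence of nontrivial idempotents in the graded endomorphism ring $E:=\End_T M=\bigoplus_{i\in\ZZ}E_i$ and its localisations. We may assume $M\neq0$. First I would record the basic structure of $E$: it is a $\ZZ$-graded $R$-algebra, finite as an $R$-module (being an $R$-submodule of the $R$-finite module $\End_R M$), and $E_0$ is the endomorphism ring of $M$ in $T\grmod$. Since $T$ is left Noetherian and $M$ is $T$-finite, $M$ is finitely presented over $T$, so $\Hom_T(M,-)$ commutes with the flat base changes $R\to R_\fm$ and $R\to\hat R$; hence $\End_{T_\fm}M_\fm=E_\fm$ and $\End_{\hat T}\hat M=E\otimes_R\hat R=\hat E$, the last equality because $-\otimes_R\hat R$ is $\fm$-adic completion on $R$-finite modules. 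Next I would note $\widehat{R_\fm}=\hat R$ — the point being that $R/\fm^n$ is already $\fm$-local, as $\sqrt{\fm^n}=\fm$ — so that $\widehat{E_\fm}=\hat E$ and the canonical ring map $E_\fm\to\hat E$ is injective (faithfully flat completion of the Noetherian local ring $R_\fm$). Finally, $M\hookrightarrow\hat M$ by Lemma~\ref{Krull.lem}, and $M\hookrightarrow M_\fm$ because the zerodivisors on the finite graded module $M$ form the union of its associated primes, which are graded and hence lie in the unique graded maximal ideal $\fm$, so every $s\in R\setminus\fm$ is a nonzerodivisor on $M$. Thus $\hat M$ and $M_\fm$ are nonzero, and each of (1), (2), (3) says exactly that $\hat E$, resp.\ $E_\fm$, resp.\ $E$, has no idempotent other than $0$ and $1$, while (4) says the same for $E_0$, which by the Krull--Schmidt discussion preceding the theorem is equivalent to $E$ being ${}^*$local.

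I would then prove the cycle $(4)\Rightarrow(1)\Rightarrow(2)\Rightarrow(3)\Rightarrow(4)$. Two implications are immediate: $(3)\Rightarrow(4)$ because $E_0\subseteq E$, and $(1)\Rightarrow(2)$ because the injective ring map $E_\fm\hookrightarrow\hat E$ carries nontrivial idempotents to nontrivial idempotents. For $(2)\Rightarrow(3)$: a nontrivial idempotent $e\in E$ gives a direct sum $M=eM\oplus(1-e)M$ with both summands nonzero, and since $M\hookrightarrow M_\fm$ both $(eM)_\fm$ and $((1-e)M)_\fm$ are nonzero, so $M_\fm$ is decomposable.

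The substantive step is $(4)\Rightarrow(1)$. Assume $E$ is ${}^*$local and set $J={}^*\rad E$. By Lemma~\ref{J^r.lem} applied to $E$, $\fm E\subseteq J$, so the $R$-module $E/J$ is killed by $\fm$, hence equals its own $\fm$-adic completion; by Lemma~\ref{T-rad.lem}, $E/J\cong E_0/\rad E_0$ is a division ring. Since $\fm$-adic completion is exact on $R$-finite modules, $\hat E/\hat J\cong\widehat{E/J}=E/J$ with $\hat J=J\hat E$. Now given an idempotent $e\in\hat E$, its image in the division ring $\hat E/\hat J$ is $0$ or $1$; if it is $0$ then $e\in\hat J$, so $e=0$ by Lemma~\ref{idempotent.lem} (applied with $E$ in place of $T$); if it is $1$ then $1-e\in\hat J$ is an idempotent, so $1-e=0$. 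Hence $\hat E=\End_{\hat T}\hat M$ has only the trivial idempotents, i.e.\ $\hat M$ is indecomposable, which completes the cycle.

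The anticipated difficulties are essentially bookkeeping: one must carry out the $\Hom$/flat-base-change identities and the identification $\widehat{R_\fm}=\hat R$ with care, and check that Lemmas~\ref{J^r.lem}, \ref{T-rad.lem} and \ref{idempotent.lem} — stated for $T$ — apply to the possibly noncommutative graded $R$-algebra $E$, which they do since they only use that $E$ is a graded $R$-algebra finite over $R$. The only genuine content is $(4)\Rightarrow(1)$, and it rests on the two facts that $E/J$ is at once a division ring and annihilated by $\fm$, combined with the idempotent-vanishing Lemma~\ref{idempotent.lem}.
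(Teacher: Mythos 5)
Your proof is correct and follows essentially the same cycle $(1)\Rightarrow(2)\Rightarrow(3)\Rightarrow(4)\Rightarrow(1)$ as the paper, with the same key step $(4)\Rightarrow(1)$ resting on Lemmas~\ref{J^r.lem}, \ref{T-rad.lem}, and \ref{idempotent.lem} applied to $E=\End_T M$ to see that $\hat E/\hat J$ is a division ring. The only differences are cosmetic: you phrase $(1)\Rightarrow(2)$ and $(2)\Rightarrow(3)$ uniformly in terms of idempotents of $E_\fm$ and $E$ (requiring flat base change identifications), whereas the paper argues directly with module decompositions and Lemma~\ref{Krull.lem}, but the content is the same.
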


\begin{proof}
  (1)$\Rightarrow$(2).
  If $M_\fm\cong N_1\oplus N_2$ as a $T_\fm$ module with $N_1\neq 0$ and $N_2\neq 0$, then
  taking the completion, $\hat M \cong \hat N_1\oplus \hat N_2$, and $\hat N_1\neq 0$ and $\hat N_2
  \neq 0$, and $\hat M$ is not indecomposable.
  If $M_\fm=0$, then $\hat M=0$, and $\hat M$ is not indecomposable.

  (2)$\Rightarrow$(3).
  If $M=0$, then $M_\fm=0$.
  Assume that
  there is a decomposition $M=M_1\oplus M_2$ with $M_1\neq 0$ and $M_2\neq 0$ in the category of $T$-modules.
  Let $m_1$ and $m_2$ be non-zero elements of $M_1$ and $M_2$, respectively.
  Then it is easy to see that there exists some $r\ge 1$ such that both $m_1$ and $m_2$ are nonzero in
  $M/\fm^rM$ by Lemma~\ref{Krull.lem}.
  Then $M_1/\fm^r M_1\neq 0$ and $M_2/\fm^r M_2\neq 0$.
  This shows $(M_1)_\fm\neq 0$ and $(M_2)_\fm\neq 0$.
  This contradicts the indecomposability of $M_\fm$, since $M_\fm=(M_1)_\fm\oplus (M_2)_\fm$.
  
  (3)$\Rightarrow$(4).
  Set $E=\End_TM$.
  Then $E\neq 0$, and $E$ does not have a non-trivial idempotent.
  So $E_0\neq 0$, and $E_0$ does not have a non-trivial idempotent.
  As the endomorphism ring of $M$ as an object of $T\grmod$ is $E_0$,
  we have that $M$ is indecomposable as an object of $T\grmod$.

  (4)$\Rightarrow$(1).
  Let $\hat e$ be an idempotent of $\hat E=\End_{\hat T}\hat M$.
  If $\hat e\in \hat J$, then $\hat e=0$ by Lemma~\ref{idempotent.lem}.
  If $1-\hat e\in \hat J$, then $1-\hat e=0$.
  So if $\hat e$ is nontrivial, then the image of $\hat e$ in $\hat E/\hat J\cong E_0/J_0$
  must be still nontrivial, but this is absurd, since $E_0/J_0$ is a division ring where
  there is no nontrivial idempotent.
\end{proof}

\begin{corollary}\label{main.cor}
  Let $M$ and $N$ be objects in $T\grmod$, the category of finitely generated graded left $T$-modules.
  Then the following are equivalent.
  \begin{enumerate}
  \item[\rm(1)] If $M=M_1\oplus\cdots\oplus M_s$ and $N=N_1\oplus\cdots\oplus N_t$ are
    decompositions into indecomposable objects in $T\grmod$, then $s=t$, and
    there exists some permutation $\sigma\in \frak S_s$ and integers $d_1,\ldots,d_s$
    such that $N_i\cong M_{\sigma i}(d_i)$, where $-(d_i)$ denotes the shift of degree.
  \item[\rm(2)] $M\cong N$ as $T$-modules.
  \item[\rm(3)] $M_\fm\cong N_\fm$ as $T_\fm$-modules.
    \item[\rm(4)] $\hat M\cong \hat N$ as $\hat T$-modules.
  \end{enumerate}
\end{corollary}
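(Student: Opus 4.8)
The plan is to run the cycle $(1)\Rightarrow(2)\Rightarrow(3)\Rightarrow(4)\Rightarrow(1)$, with the first three implications soft and the last one carrying all the weight. For $(1)\Rightarrow(2)$, a degree shift $M_{\sigma i}(d_i)$ has the same underlying $T$-module as $M_{\sigma i}$, so the hypothesis gives $N=\bigoplus_i N_i\cong\bigoplus_i M_{\sigma i}(d_i)\cong\bigoplus_i M_{\sigma i}=M$ as $T$-modules. Implication $(2)\Rightarrow(3)$ is localization at $\fm$. For $(3)\Rightarrow(4)$, note that $R/\fm^r$ is local for every $r$ (its only prime is $\fm$, since $\fm$ is maximal and every prime containing $\fm^r$ contains $\fm$), so $M/\fm^rM$ is $\fm$-local, whence $\hat M=\varprojlim_r M/\fm^rM=\varprojlim_r M_\fm/\fm^rM_\fm=\widehat{M_\fm}$ functorially; thus $M_\fm\cong N_\fm$ gives $\hat M\cong\hat N$.

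It remains to prove $(4)\Rightarrow(1)$. Since $R$ is Noetherian and $T$ is module-finite over $R$, $T$ is left Noetherian and $T\grmod$ is abelian, and $\hat R$ is a complete Noetherian local ring (an inverse limit of the local rings $R/\fm^r$), hence Henselian, over which $\hat T=T\otimes_R\hat R$ is module-finite; consequently $T\grmod$ is Krull--Schmidt (as recalled just before Theorem~\ref{main.thm}) and so is the category of finite $\hat T$-modules (Lemma~\ref{semi-perfect.lem}). If $M=0$ then $\hat M=0=\hat N$, and since $N$ embeds in $\hat N$ by Lemma~\ref{Krull.lem} we get $N=0$, so all four statements hold trivially; thus assume $M,N\neq 0$ and write $M=M_1\oplus\cdots\oplus M_s$, $N=N_1\oplus\cdots\oplus N_t$ as sums of indecomposables in $T\grmod$. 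By Theorem~\ref{main.thm}, each $\hat M_i$ and $\hat N_j$ is a nonzero indecomposable $\hat T$-module, and $\bigoplus_i\hat M_i\cong\bigoplus_j\hat N_j$; the Krull--Schmidt property over $\hat T$ then forces $s=t$ and furnishes a permutation $\sigma$ with $\hat M_{\sigma i}\cong\hat N_i$ for all $i$. Hence $(1)$ will follow from the

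\medskip
\noindent\textbf{Key Lemma.} \emph{If $P,Q\in T\grmod$ are indecomposable and $\hat P\cong\hat Q$ as $\hat T$-modules, then $P\cong Q(d)$ in $T\grmod$ for some $d\in\ZZ$.}
\medskip

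To prove it, put $E=\End_TP$, $H=\Hom_T(P,Q)$, $G=\Hom_T(Q,P)$; these are finite graded $R$-modules, and composition defines a graded $R$-linear map $\mu\colon G\otimes_RH\to E$ whose image $I$ is a graded $R$-submodule of $E$, with $I_0$ spanned by the compositions $\psi\varphi$ for $\varphi\in H$, $\psi\in G$ homogeneous of complementary degrees. Because $P$ is finitely presented over $T$ and $\hat R$ is flat over $R$, completion carries these $\Hom$'s to the corresponding $\hat T$-$\Hom$'s and commutes with $\mu$ and with the formation of its image; so $\hat I\subseteq\hat E=\End_{\hat T}\hat P$ is exactly the image of the $\hat T$-composition map $\Hom_{\hat T}(\hat Q,\hat P)\otimes_{\hat R}\Hom_{\hat T}(\hat P,\hat Q)\to\hat E$. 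Choosing mutually inverse maps $\hat\varphi\colon\hat P\to\hat Q$ and $\hat\psi\colon\hat Q\to\hat P$ gives $\id_{\hat P}=\hat\psi\hat\varphi\in\hat I$. Now $E/I$ is a finite graded $R$-module, so by Lemma~\ref{Krull.lem} it embeds into its $\fm$-adic completion $\widehat{E/I}=\hat E/\hat I$; since the image of $\id_P$ there equals the image of $\id_{\hat P}\in\hat I$, namely $0$, we conclude $\id_P\in I$, and as $I$ is graded and $\id_P$ is homogeneous of degree $0$, in fact $\id_P=\sum_k\psi_k\varphi_k$ with $\varphi_k\in H$, $\psi_k\in G$ homogeneous of complementary degrees, so each $\psi_k\varphi_k$ lies in $E_0=\End_{T\grmod}P$. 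Since $P$ is indecomposable, $E_0$ is local, so $E_0\setminus E_0^\times$ is closed under addition and cannot contain all the $\psi_k\varphi_k$ (whose sum is the unit $\id_P$); picking $k_0$ with $\psi_{k_0}\varphi_{k_0}\in E_0^\times$ and letting $d$ be the degree of $\varphi_{k_0}$, we get morphisms $\varphi_{k_0}\colon P\to Q(d)$ and $\psi_{k_0}\colon Q(d)\to P$ in $T\grmod$ with $\psi_{k_0}\varphi_{k_0}$ an automorphism of $P$; thus $\varphi_{k_0}$ is a split monomorphism in the abelian category $T\grmod$, its cokernel is a direct summand of the indecomposable module $Q(d)$ and hence $0$, so $\varphi_{k_0}$ is an isomorphism $P\cong Q(d)$. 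Applying this with $(P,Q)=(M_{\sigma i},N_i)$ yields $N_i\cong M_{\sigma i}(d_i)$ with $d_i$ the negative of the resulting $d$, completing $(4)\Rightarrow(1)$.

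The main obstacle is precisely the Key Lemma: the isomorphism $\hat P\cong\hat Q$ lives over the completion and need not come from any homogeneous morphism, so one must manufacture homogeneous maps whose composite is a unit. The device above — forcing $\id_{\hat P}$ into the completion of the ``composable part'' $I\subseteq\End_TP$, pulling it back via the separatedness of the finite graded module $E/I$ (Lemma~\ref{Krull.lem}), and then using the locality of $E_0$ to isolate a single homogeneous summand — is the crux; everything else is formal.
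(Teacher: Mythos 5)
Your proof is correct and follows essentially the same route as the paper's: reduce via the Krull--Schmidt property of finite $\hat T$-modules (plus Theorem~\ref{main.thm}) to the case of two indecomposables $P,Q$ in $T\grmod$ with $\hat P\cong\hat Q$, and then manufacture a homogeneous isomorphism $P\cong Q(d)$. The one place you depart from the paper is in how that last step is executed. The paper writes the mutually inverse $\hat T$-maps as $\hat\phi=\sum_i\hat a_i\phi_i$, $\hat\psi=\sum_j\hat b_j\psi_j$ with $\phi_i,\psi_j$ homogeneous and $\hat a_i,\hat b_j\in\hat R$, observes that $1_{\hat M}=\sum\hat a_i\hat b_j\psi_j\phi_i\notin\hat J$ forces some $\psi_j\phi_i\notin J={}^*\rad E$, and then uses that $E$ is ${}^*$local to conclude that this homogeneous $\psi_j\phi_i$ (of degree $u_i+v_j$, not necessarily $0$) is a unit of $E$, whence $\psi_j$ is a graded isomorphism $N\to M(v_j)$. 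You instead package the computation into the image $I$ of the graded composition pairing $G\otimes_R H\to E$, use flatness of $\hat R$ to identify $\hat I$ with the analogous ideal over $\hat T$, deduce $\id_P\in I$ from the separatedness of the finite graded module $E/I$ (Lemma~\ref{Krull.lem}), decompose $\id_P$ into homogeneous complementary pairs $\psi_k\varphi_k\in E_0$, and extract a unit using only the locality of $E_0$. These are two realizations of the same idea — a $\hat T$-isomorphism is an $\hat R$-combination of homogeneous maps, so some homogeneous composite must fail to lie in the radical — with your $I$-and-separatedness device a little more modular (and staying entirely in degree $0$, so you only need $E_0$ local rather than invoking that $E$ is ${}^*$local), and the paper's direct coefficient argument a little terser.
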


\begin{proof}
  (1)$\Rightarrow$(2)$\Rightarrow$(3)$\Rightarrow$(4) is trivial.
  We prove (4)$\Rightarrow$(1).
  As $\hat R$ is a Noetherian complete local ring and $\hat T$ is its module-finite algebra,
  the category of finite $\hat T$-modules is Krull--Schmidt.
  As $\hat M=\hat M_1\oplus\cdots\oplus \hat M_s$ and $\hat N=\hat N_1\oplus \cdots\oplus \hat N_t$
  are decompositions into indecomposable $\hat T$-modules by Theorem~\ref{main.thm},
  we have that $s=t$, and after change of indices (if necessary), there are isomorphisms
  $\hat M_i\cong \hat N_i$ for $i=1,\ldots,s$.
  So we may assume that $s=t=1$.

  Let $\hat\phi:\hat M\rightarrow \hat N$ be a $\hat T$-isomorphism, and let
  $\hat \psi:\hat N\rightarrow \hat M$ be its inverse.
  We can write $\hat\phi=\sum_i\hat a_i\phi_i$ and $\hat \psi=\sum_j\hat b_j\psi_j$ with
  $\phi_i\in\Hom_T(M,N)_{u_i}$, $u_i\in\ZZ$, $\hat a_i\in\hat R$,
  $\psi_j\in\Hom_T(N,M)_{v_j}$, $v_j\in\ZZ$, and $\hat b_j\in\hat R$.
  So $1_{\hat M}=\hat \psi\hat\phi=\sum_{ij}\hat a_i\hat b_j\psi_j\phi_i$.
  As $1_{\hat M}\notin \hat J$, there exists some $(i,j)$ such that
  $\psi_j\phi_i\notin J$, where $J={}^*\rad\End_TM$.
  As $\psi_j\phi_i$ is a homogeneous element of $E=\End_TM$, which is ${}^*$local,
  we have that $\psi_j\phi_i$ is a unit of $E$.
  In particular, $\psi_j$ is a split epimorphism.
  As $N$ is also indecomposable, we have that $\psi_j$ is a $T$-isomorphism.
  So we have that $\psi_j:N\rightarrow M(v_j)$ is an isomorphism in $T\grmod$.
\end{proof}
  
\begin{corollary}\label{main2.cor}
  Let $R=\bigoplus_{i\geq 0}R_i$ be a Noetherian $\ZZ_{\geq 0}$-graded commutative ring such that
  $(R_0,\fm_0)$ is an $F$-finite Henselian local ring of prime characteristic $p$.
  Let $\fm=\fm_0+R_+$, where $R_+=\bigoplus_{i>0}R_i$.
  Let $\hat R$ be the $\fm$-adic completion of $R$.
  Let $M_1,\ldots,M_r$ be finitely generated $\QQ$-graded $R$-modules.
  Then the following are equivalent.
  \begin{enumerate}
  \item[\rm(1)] $R$ has finite $F$-representation type \(FFRT for short\) in the graded sense with $M_1,\ldots,M_r$.
    That is,
    \begin{enumerate}
      \item[\rm(1-a)] For each $i$, $M_i$ is indecomposable;
    \item[\rm(1-b)] For each $i$, there exists some $e\geq 0$ and $c\in\QQ$ such that $M_i(c)$ is a direct summand of 
      ${}^eR$;
    \item[\rm(1-c)] For each $e\geq 0$, any indecomposable direct summand of ${}^eR$ is
        isomorphic to $M_i(c)$ for some $1\leq i\leq r$ and $c\in\QQ$.
    \end{enumerate}
  \item[\rm(2)] The local ring $\hat R$ has FFRT with $\hat M_1,\ldots,\hat M_r$.
    That is,
\begin{enumerate}
\item[\rm(2-a)] For each $i$, $\hat M_i$ is indecomposable;
\item[\rm(2-b)]  For each $i$, there exists some $e\geq 0$ such that $\hat M_i$ is a direct summand of ${}^e\hat R$;
\item[\rm(2-c)] For each $e\geq 0$, any indecomposable direct summand of ${}^e\hat R$ is
  isomorphic to $\hat M_i$ for some $1\leq i\leq r$.
\end{enumerate}
  \end{enumerate}
  In particular, $R$ has FFRT in the graded sense if and only if $\hat R$ has FFRT.
\end{corollary}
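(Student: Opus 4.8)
The plan is to derive Corollary~\ref{main2.cor} formally from Theorem~\ref{main.thm} and Corollary~\ref{main.cor}, applied with $T=R$ to the various $\QQ$-graded $R$-modules in sight; the genuine content is already contained in those two results, and what remains is a reduction together with two routine compatibilities. First I would dispose of the hypotheses: since $R$ is Noetherian $\ZZ_{\geq 0}$-graded it is a finitely generated $R_0$-algebra, and since $R_0$ is $F$-finite it follows by a standard argument that $R$, and hence also $\hat R$, is $F$-finite; thus each Frobenius pushforward ${}^eR$, carrying its natural $\frac{1}{p^e}\ZZ$-grading regarded as a $\QQ$-grading, is an object of $R\grmod$. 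Next I would note that Theorem~\ref{main.thm} and Corollary~\ref{main.cor}, and the lemmas on which they rest, hold verbatim for $\QQ$-graded (indeed for $\Gamma$-graded, $\Gamma$ a torsion-free abelian group) rings and modules, since their proofs use only the existence of ${}^*$maximal submodules, graded Nakayama, Artin--Rees, and the fact that the degree-zero endomorphism ring is module-finite over the Henselian local $R_0$; I shall use these $\QQ$-graded versions freely.

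The two compatibilities I would record are: for a finite $\QQ$-graded $R$-module $M$ and any $c\in\QQ$, $\widehat{M(c)}\cong\hat M$ as $\hat R$-modules, since $\fm$-adic completion forgets the grading; and $\widehat{{}^eR}\cong{}^e\hat R$ as $\hat R$-modules, since the submodule $\fm^n\cdot{}^eR$ of ${}^eR$ is the ideal $(\fm^n)^{[p^e]}$ of $R$, and these ideals are cofinal among the powers of $\fm$, so that $\widehat{{}^eR}=\varprojlim_n R/(\fm^n)^{[p^e]}\cong\hat R$ carries the Frobenius-twisted $\hat R$-structure, i.e.\ is ${}^e\hat R$. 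Combining these with Krull--Schmidt, I would decompose ${}^eR=\bigoplus_k N_k$ into indecomposables in $R\grmod$ and complete, getting ${}^e\hat R\cong\bigoplus_k\hat N_k$ with each $\hat N_k$ indecomposable by Theorem~\ref{main.thm}; since the category of finite $\hat R$-modules is Krull--Schmidt (as $\hat R$ is complete Noetherian local, cf.\ Lemma~\ref{semi-perfect.lem}), the $\hat N_k$ are, up to isomorphism, precisely the indecomposable direct summands of ${}^e\hat R$. This is the bridge between the two notions.

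With this in hand, I would prove (1)$\Rightarrow$(2) as follows. Part (2-a) is Theorem~\ref{main.thm}, (4)$\Rightarrow$(1), applied to the indecomposable object $M_i$ of $R\grmod$ (note $M_i\neq0$, so $\hat M_i\neq0$). For (2-b): if $M_i(c)$ is a graded direct summand of ${}^eR$, then completing, $\hat M_i\cong\widehat{M_i(c)}$ is a direct summand of $\widehat{{}^eR}={}^e\hat R$. For (2-c): given an indecomposable direct summand $L$ of ${}^e\hat R$, the previous paragraph gives $L\cong\hat N_k$ for some indecomposable graded direct summand $N_k$ of ${}^eR$; by (1-c), $N_k\cong M_i(c)$ for some $i$ and $c\in\QQ$, whence $L\cong\hat M_i$.

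The converse (2)$\Rightarrow$(1) is the mirror image, now invoking Theorem~\ref{main.thm}, (1)$\Rightarrow$(4), and Corollary~\ref{main.cor}, (4)$\Rightarrow$(1), to descend isomorphisms from finite $\hat R$-modules to $R\grmod$: (1-a) is that $\hat M_i$ indecomposable forces $M_i$ indecomposable as a graded module; for (1-b), if $\hat M_i$ is a direct summand of ${}^e\hat R$ then $\hat M_i\cong\hat N_k$ for some indecomposable graded direct summand $N_k$ of ${}^eR$, and Corollary~\ref{main.cor} (the case $s=t=1$) gives $N_k\cong M_i(c)$ for some $c\in\QQ$, so $M_i(c)$ is a graded direct summand of ${}^eR$; and for (1-c), if $N$ is an indecomposable graded direct summand of ${}^eR$ then $\hat N$ is an indecomposable direct summand of ${}^e\hat R$, so by (2-c) $\hat N\cong\hat M_i$ for some $i$, whence $N\cong M_i(c)$ for some $c\in\QQ$ by Corollary~\ref{main.cor}. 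This proves (1)$\Leftrightarrow$(2); the final assertion then follows, one direction being immediate and the other by collecting the indecomposable graded direct summands of all the ${}^eR$ and observing (Corollary~\ref{main.cor}) that two of them have isomorphic completions exactly when they agree up to a degree shift, so that FFRT of $\hat R$ leaves only finitely many of them up to shift, any choice of representatives witnessing FFRT of $R$ in the graded sense. I expect the only real care to be needed in establishing the $\QQ$-graded versions of the main results and the identification $\widehat{{}^eR}\cong{}^e\hat R$; granting these, the corollary is a formal consequence of Krull--Schmidt on both the graded and the complete-local side.
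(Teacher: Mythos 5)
Your proposal is correct and takes essentially the same route as the paper: apply Theorem~\ref{main.thm} for the (a)-equivalence, establish the identification $\widehat{{}^eR}\cong{}^e\hat R$, decompose ${}^eR$ into graded indecomposables, and shuttle back and forth with the Krull--Schmidt property on the complete-local side together with Corollary~\ref{main.cor} to handle (b), (c), and the final ``in particular''. The only thing you do differently is to flag, up front and explicitly, two points the paper applies silently: that Theorem~\ref{main.thm} and Corollary~\ref{main.cor} must be upgraded from $\ZZ$-graded to $\QQ$-graded modules (their proofs indeed go through verbatim for any torsion-free grading group), and that $F$-finiteness of $R_0$ propagates to $R$ so that each ${}^eR$ is a finite $\QQ$-graded $R$-module; both are correct and worth recording.
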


\begin{proof}
  By Theorem~\ref{main.thm}, (1-a) and (2-a) are equivalent.

  Note that for each $e$,
  \[
    {}^e\hat R\cong\projlim {}^e(R/\fm^n)
    =
    \projlim {}^eR/{}^e(\fm^n)
    =
    \projlim {}^eR/{}^e((\fm^n)^{[p^e]})
    =
    \projlim {}^eR / (\fm^n ){}^eR=\widehat{{}^eR},
    \]
    where $\fm^{[p^r]}$ is the ideal of $R$ generated by $\{a^{p^r}\mid a\in \fm\}$.

    (1-a)$\Rightarrow$(2-a).
    If $M_i(c)$ is a direct summand of ${}^eR$, then $\hat M_i$ is a direct summand of
    $\widehat{{}^e R}={}^e \hat R$.
    
    (2-a)$\Rightarrow$(1-a).
    Assume that $\hat M_i$ is a direct summand of ${}^e\hat R$.
    If we have a decomposition
    \begin{equation}\label{e-dec.eq}
      {}^eR \cong N_1\oplus\cdots\oplus N_r,
    \end{equation}
      where each $N_j$ is an indecomposable $\QQ$-graded $R$-modules,
      then
      \begin{equation}\label{e-dec-hat.eq}
        {}^e\hat R\cong\widehat{{}^eR}\cong \hat N_1\oplus\cdots\oplus \hat N_r.
      \end{equation}
      This is the decomposition into indecomposable modules by Theorem~\ref{main.thm}.
      So by the Krull--Schmidt property of the category of $\hat R$-modules,
      $\hat M_i\cong \hat N_j$ for some $j$.
      By Corollary~\ref{main.cor}, $N_j\cong M_i(c)$ for some $c\in\QQ$.

      (1-c)$\Rightarrow$(2-c).
      Let $\hat N$ be a direct summand of ${}^e\hat R$.
      By assumption, there is a decomposition (\ref{e-dec.eq}) such that each $N_j$ is
      isomorphic to $M_{i(j)}(c_j)$ for some $1\leq i(j)\leq r$ and $c_j\in\QQ$.
      Then the isomorphism (\ref{e-dec-hat.eq}) holds, and by the Krull--Schmidt property,
      $\hat N\cong \hat N_j\cong \hat M_{i(j)}$.

      (2-c)$\Rightarrow$(1-c).
      Let $L$ be a direct summand of ${}^eR$.
      Then $\hat L$ is a direct summand of ${}^e\hat R$.
      So $\hat L\cong \hat M_i$ for some $i$.
      Hence by Corollary~\ref{main.cor}, $L\cong M_i(c)$ for some $c\in\QQ$.

      We prove the last assertion.
      The \lq only if' part is clear from what we have proved above.
      We prove the \lq if' part.
      Let $\hat R$ have FFRT with the finite indecomposable $\hat R$-modules
      $\hat L_1,\ldots,\hat L_r$.
      So for each $i$, $\hat L_i$ is a direct summand of ${}^e\hat R$ for some $e$.
      Now let (\ref{e-dec.eq}) be the decomposition of ${}^eR$ into indecomposable $\QQ$-graded $R$-modules.
      Then we have an isomorphism (\ref{e-dec-hat.eq}), which is a decomposition into indecomposables by
      Theorem~\ref{main.thm}.
      By the Krull--Schmidt, $\hat L_i\cong \hat N_j$ for some $j$.
      In particular, there is a finite indecomposable $\QQ$-graded module $M_i$ such that $\hat M_i\cong \hat L_i$.
      By what we have proved above, $R$ has FFRT with $M_1,\ldots,M_r$ in the
      graded sense, as required.
\end{proof}

\begin{flushleft}
Mitsuyasu Hashimoto\\
Department of Mathematics\\
Osaka Metropolitan University\\
Sumiyoshi-ku, Osaka 558--8585, JAPAN\\
e-mail: {\tt mh7@omu.ac.jp}
\end{flushleft}

\begin{flushleft}
  Yuntian Yang\\
  Saitama, JAPAN\\
e-mail: {\tt yyt0317@hotmail.co.jp}
\end{flushleft}

\end{document}